\newtheorem{theorem}{Theorem}[section]
\newtheorem{lemma}{Lemma}[section]
\newtheorem{remark}{Remark}[section]
\numberwithin{equation}{section}
\numberwithin{figure}{section}
\numberwithin{table}{section}
\def\ps@pprintTitle{%
 \let\@oddhead\@empty
 \let\@evenhead\@empty
 \def\@oddfoot{\reset@font\hfil\thepage\hfil}
 \let\@evenfoot\@oddfoot}
\def\XXint#1#2#3{{\setbox0=\hbox{$#1{#2#3}{\int}$}
\vcenter{\hbox{$#2#3$}}\kern-.51\wd0}}
\newcommand{\bx}{\mathbf x}
\newcommand{\by}{\mathbf y}
\newcommand{\bfm}{\mathbf m}
\newcommand{\bv}{\mathbf v}
\newcommand{\bu}{\mathbf u}
\newcommand{\bT}{\mathbf T}
\newcommand{\calC}{\mathcal{C}}
\renewcommand{\div}{\operatorname{div}}
\begin{document}

\setlength{\pdfpageheight}{\paperheight}
\setlength{\pdfpagewidth}{\paperwidth}
\title{Synchronized Optimal Transport for Joint Modeling of Dynamics Across Multiple Spaces}
\author{Zixuan Cang}
\address{Department of Mathematics, North Carolina State University, Raleigh, NC.}
\author{Yanxiang Zhao}
\address{Department of Mathematics, George Washington University, Washington D.C.}

\begin{abstract}
  Optimal transport has been an essential tool for reconstructing dynamics from complex data. With the increasingly available multifaceted data, a system can often be characterized across multiple spaces. Therefore, it is crucial to maintain coherence in the dynamics across these diverse spaces. To address this challenge, we introduce Synchronized Optimal Transport (SyncOT), a novel approach to jointly model dynamics that represent the same system through multiple spaces. With given correspondence between the spaces, SyncOT minimizes the aggregated cost of the dynamics induced across all considered spaces. The problem is discretized into a finite-dimensional convex problem using a staggered grid. Primal-dual algorithm-based approaches are then developed to solve the discretized problem. Various numerical experiments demonstrate the capabilities and properties of SyncOT and validate the effectiveness of the proposed algorithms.
\end{abstract}

\begin{keyword}
dynamical optimal transport; multiple spaces; primal-dual methods; Yan algorithm.
\end{keyword}

\date{\today}
\maketitle

\section{Introduction}\label{section:Intro}

Optimal transport (OT) finds an optimal coupling between two distributions that minimizes the total coupling cost and provides a geometric-aware metric for distributions \cite{peyre2019computational,villani2009optimal}. OT has found broad applications in imaging science \cite{chizat2018scaling}, machine learning \cite{tolstikhin2017wasserstein, fatras2020learning}, and biology \cite{schiebinger2019optimal,cang2020inferring,CangZhao_NatureMethod2023,sha2024reconstructing,tong2020trajectorynet}. Significant theoretical and computational advances have led to efficient algorithms for practical applications of OT \cite{cuturi2013sinkhorn,ferradans2014regularized,benamou2015iterative}. Variants of conventional OT have been developed to address tasks that are commonly encountered in applied fields,  including partial OT \cite{figalli2010optimal,igbida2018augmented}, unbalanced OT \cite{chizat2018scaling,chizat2018unbalanced} and unnormalized OT \cite{gangbo2019unnormalized,lee2021generalized} that relax the marginal distribution constraints; structured OT \cite{alvarez2018structured} for structured data like graphs; and multi-marginal OT \cite{pass2015multi,strossner2023low} that connects more than two distributions.

Recently, OT has become a powerful tool for reconstructing temporal dynamics from complex data, particularly in the field of biology. Static OT has been applied to find correspondence between cells groups from high-dimensional gene expression data measured at different time points \cite{schiebinger2019optimal}. Additionally, the dynamical formulation of OT has been applied to inferring dynamics of high temporal resolution from a sequence of data measured at coarse time points \cite{sha2024reconstructing,tong2020trajectorynet}. These reconstructed temporal correspondence and high-resolution temporal data enables learning complex data-driven models and recovering underlying mechanisms from complex data. The governing rules and underlying mechanisms can be revealed using methods that infer the ground costs that lead to the observed transport plans \cite{stuart2020inverse,scarvelis2022riemannian}.

With the advancements in data acquisition technologies, multifaceted data of a same system becomes increasingly common. For example, various high-throughput approaches have been developed to investigate a biological system from different aspects, such as single-cell RNA sequencing for gene expression levels, single-cell ATAC sequencing for chromatin accessibility, and spatial transcriptomics acquiring spatial information \cite{heumos2023best}. Each modality of such data offers a unique view of the same system but from distinct spaces. While existing single-space dynamical OT methods excel in reconstructing dynamics from the high-dimensional data of one modality, they often fail to maintain coherence across multiple modalities. Optimal dynamics in one space may induce an abnormally high-cost dynamics in another. Therefore, there is an emerging need for a dynamical OT method that can jointly model the dynamics across multiple related spaces, ensuring coherent application of OT to multimodal data. 

In this work, we propose a dynamical OT framework, {\it synchronized optimal transport} (SyncOT), which simultaneously handles multiple spaces to seek coherent dynamics across multimodal data or models. In this framework, the dynamics in a designated primary space are used to induce corresponding dynamics in several secondary spaces through predefined mappings between spaces. SyncOT minimizes the aggregated kinetic energy of the dynamics in the primary space and the induced dynamics in the secondary spaces. Numerical algorithms are developed to solve the proposed SyncOT for two commonly encountered types of mappings between spaces. The framework is discretized using a staggered grid and solved using advanced primal-dual based algorithms. The properties of SyncOT and the algorithms are demonstrated with various numerical experiments, including multiple connected spaces of different dimensions and an OT problem where dynamics through consistent background environment is preferred.

\section{Background of Dynamical Optimal Transport} \label{section:Overview}

In this section, we briefly review the dynamical optimal transport. Given two probability measures $\mu\in\mathcal{P}(X), \nu\in\mathcal{P}(Y)$, and $c: X \times Y \rightarrow [0,\infty]$ lower semi-continuous, the Monge and Kantorovich OT problems are defined as follows \cite{figalli2021invitation}:
\begin{align}
&C_{M}(\mu,\nu) = \inf\left\{ \int_{X} c(x,T(x)) \ \text{d}\mu(x)\ |\  {T_{\sharp} \mu = \nu} \right\}, \label{eqn:MongeOT}\\
&C_{K}(\mu,\nu) = \inf\left\{ \int_{X\times Y} c(x,y) \ \text{d}\gamma(x,y)\ |\  \gamma \in \Gamma(\mu,\nu) \right\}. \label{eqn:KantorovichOT}
\end{align}
Here $T_{\sharp}$ is the pushforward operator, and $c(x,y)$ is the cost function. $\Gamma(\mu,\nu)$ is the set of couplings between $\mu$ and $\nu$. 

When $X=Y$, and given $\mu,\nu \in \mathcal{P}_{p}(X)$ two probability measures with finite $p$-th moment, their $p$-Wasserstein distance is defined as \cite{figalli2021invitation}
\begin{align}
  W_{p}^p(\mu,\nu) := \inf_{\gamma \in \Gamma(\mu,\nu)} \int_{X\times X} d(x,y)^p\ \text{d}\gamma(x,y).
\end{align}
When $p=2$, Benamou and Brenier \cite{benamou2000computational} showed that the squared 2-Wasserstein distance $W_2^2$ is equivalent to the following dynamical OT formulation
\begin{equation}\label{eq:DynamicalOT01}
  \begin{aligned}
  &    \min_{(\rho,\mathbf{v})}\  \int_0^1\int_{X} \rho \|\mathbf{v}\|^2 \ \mathrm{d}\mathbf{x}\mathrm{d}t, \\
  \text{s.t.} \quad & \dot{\rho} = -\nabla\cdot(\rho\mathbf{v}),\ \rho(\mathbf{x},0)=\mu,\ \rho(\mathbf{x},1)=\nu,\ \mathbf{v}\cdot \mathbf{n}|_{\partial X} = 0.
  \end{aligned}
\end{equation}
Using Otto's calculus, the dynamical OT (\ref{eq:DynamicalOT01}) can be also reformulated as
\begin{align}\label{eqn:DynamicalOT02}
  W_2^2(\mu,\nu) = \inf_{\rho \in \Gamma(\mu,\nu)} \int_0^1 \left\langle\dot{\rho}, \dot{\rho}\right\rangle_{T_{\rho}}^{W_2} \text{d}t.
\end{align}
Slightly abusing notation, we denote $\Gamma(\mu,\nu) = \left\{\rho(\cdot,t)\in\mathcal{P}(X), \forall t\ |\ \rho(\cdot,0) = \mu, \rho(\cdot,1) = \nu \right\}$ and extend the notation $\Gamma(\mu,\nu)$ to represent both the time-independent couplings $\gamma$ and the time-dependent density $\rho$ connecting $\mu$ and $\nu$.

The dynamical OT (\ref{eq:DynamicalOT01}) has been extended to unbalanced dynamical OT \cite{chizat2018unbalanced, liero2018optimal}, also called  Wasserstein-Fisher-Rao distance, by introducing additional terms allowing for birth and death of particles:
\begin{align}\label{eqn:WFR}
  \text{WFR}_{\lambda}(\mu,\nu) := \inf_{(\rho,\mathbf{v},g)} \int_0^1 \int_{X} \rho(\|\mathbf{v}\|^2 + \lambda g^2) \ \text{d}\mathbf{x}\text{d}t
\end{align}
subject to the continuity equation with a growth term $g$:
\begin{align}\label{eqn:continuity_growthterm}
  \dot{\rho} = -\nabla\cdot(\rho\mathbf{v}) + \lambda\rho g, \quad \rho(\mathbf{x},0) = \mu, \ \rho(\mathbf{x},1) = \nu.
\end{align}
Here for any $t$, $\rho(\cdot,t)\in\mathcal{M}_+(X)$ is an unnormalized density, since the growth term does not guarantee the normalization $\int_X\dot{\rho}\ \text{d}\mathbf{x} = 0$. $\mathcal{M}_+(X)$ represents the set of all positive Radon measures on $X$. Unnormalized dynamical OT has been proposed \cite{gangbo2019unnormalized,lee2021generalized}, by taking the growth term in (\ref{eqn:continuity_growthterm}) as a more general form $g$ instead of $\rho g$, in time-dependent only form \cite{gangbo2019unnormalized} or in both time- and spatial-dependent form \cite{lee2021generalized}. A further extension of the dynamical OT has been developed to incorporate more general energy functionals by replacing the 2-Wasserstein norm of $\dot{\rho}$ in (\ref{eqn:DynamicalOT02}) with a general Lagrangian $\mathcal{L}(\rho,\dot{\rho},t)$ \cite{pooladian2023neural,neklyudov2023computational}.

\section{Synchronized Optimal Transport Formulation} \label{section:SynOT}

In this section, we propose {\it synchronized optimal transport} (SyncOT), to attain coherent dynamics across multiple spaces. Let $X^{(i)}, i=1,\cdots,d$ be $d$ spaces, with $X^{(i)} \subset \mathbb{R}^{N_i}$, SyncOT considers the following minimization problem
\begin{align}\label{eqn:SynOT_dSpaces}
 \inf_{(\rho^{(i)},\mathbf{v}^{(i)})} \sum_{i=1}^{d} \alpha_i \int_0^1 \int_{X^{(i)}} \rho^{(i)} \|\mathbf{v}^{(i)}\|^2  \ \text{d}\mathbf{x}^{(i)}\text{d}t
\end{align}
subject to the following constraints:
\begin{align}\label{eqn:SynOT_dSpaces_Constraints}
  &\dot{\rho}^{(i)} = -\nabla\cdot(\rho^{(i)}\mathbf{v}^{(i)}), \quad \rho^{(i)}(\mathbf{x},0) = \rho^{(i)}_0, \ \rho^{(i)}(\mathbf{x},1) = \rho^{(i)}_1, \ i=1,\cdots, d,\\
  &\rho^{(i)}(\bx^{(i)}, t) = \mathcal{T}^{(i)}\left(\rho^{(1)}(\bx^{(1)}, t) \right), \quad i=2,\ldots d.
\end{align}
Here $\alpha_i\ge 0$ and $\sum_{i} \alpha_i = 1$. The density $\rho^{(1)}(\bx,t)$ represents an evolving distribution in $X^{(1)}$ which we call the primary space . The dynamics in the primary space induces dynamics in other related spaces through maps $\mathcal{T}^{(i)}: \mathcal{M}_+(X^{(1)}) \rightarrow \mathcal{M}_+(X^{(i)})$. Note that the constraints in the SyncOT framework (\ref{eqn:SynOT_dSpaces}) naturally implicit a {\it marginal compatibility condition}: for each space $X^{(i)}: i=2\cdots, d$, there exists a map $\mathcal{T}^{(i)}$ such that
\begin{align}
  \rho_0^{(i)}(\bx^{(i)}) = \mathcal{T}^{(i)}\left(\rho_0^{(1)}(\bx^{(1)})\right), \quad \rho_1^{(i)}(\bx^{(i)}) = \mathcal{T}^{(i)}\left(\rho_1^{(1)}(\bx^{(1)})\right), \quad i=2,\cdots, d.
\end{align}

\begin{figure}[t]

  \centering

  \begin{tikzpicture}

    \draw [line width=0.5pt, xshift=0cm] plot [smooth, tension=0.7] coordinates { (-4,0) (-2,0.5) (0,0) (2,-0.5) (4,0)};

    \draw [line width=0.5pt, xshift=0cm, yshift=-2.5cm] plot [smooth, tension=0.5] coordinates { (-4,0) (-2.5,-0.3) (-1.0,-0.3) (0,0) (1.7,1)  (3.3,-0.1) (4,0.2)};

    \draw (-4,0) circle(2pt) [fill];
    \draw ( 4,0) circle(2pt) [fill];
    \draw (-4,-2.5) circle(2pt) [fill];
    \draw (4,-2.3) circle(2pt) [fill];

    \node at (-5.7,-2.5) {$\mathcal{T}\mu = \rho^{(2)}(\mathbf{x},0)$};
    \node at ( 5.7,-2.5) {$\rho^{(2)}(\mathbf{x},1) = \mathcal{T}\nu$};
    \node at (0, -3.0) {$\rho^{(2)}(\mathbf{x},t)$};
    \node at (-5.5, 0) {$\rho^{(1)}(\mathbf{x},0) = \mu$};
    \node at ( 5.5, 0) {$ \nu = \rho^{(1)}(\mathbf{x},1) $};
    \node at (0, -0.5) {$\rho^{(1)}(\mathbf{x},t)$};
    \node at (-2.0, -1.1) {$\mathcal{T}$};
    \draw[-{Triangle[width=5pt,length=8pt]}, line width=1pt](-0.5,0.5)--(0.5,0.14);
    \draw[-{Triangle[width=5pt,length=8pt]}, line width=1pt](-0.5,-2.4)--(0.5,-1.9);
    \draw[-{Triangle[width=5pt,length=8pt]}, line width=1pt](-2.5,-0.3)--(-2.5,-2.0);
              
  \end{tikzpicture}

  \caption{Schematic of the synchronized optimal transport. The two dynamics $\rho^{(1)}(\mathbf{x},t)$ and$\rho^{(2)}(\mathbf{x},t)$ are related by a transform $\mathcal{T}$. SyncOT seeks an optimal dynamics $\rho^{(1),*}$ in the primary space such that the combined kinetic energy in primary and secondary spaces is minimized.   }
\label{fig:Hybrid} 

\end{figure}
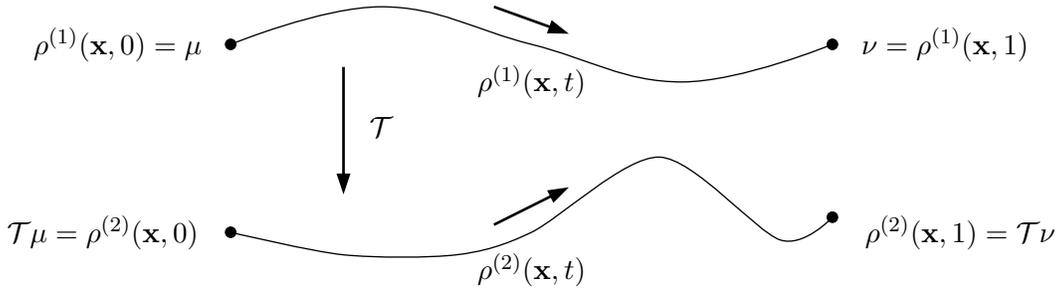

In summary, the marginal compatibility condition guarantees the wellposedness of the SyncOT framework. Figure \ref{fig:Hybrid} shows a schematic illustration of the SyncOT framework for $d=2$. 
Figure \ref{fig:1d_example} shows a one-dimensional example in which an optimal path in the primary space leads to a high-cost path in the secondary space. SyncOT addresses the secondary space by adjusting the optimality in the primary space to instead minimize the total costs across both spaces, thus achieving a more balanced dynamics. In this scenario, the path in the secondary space is almost constant speed due to the high weight assigned to the secondary space.

\begin{figure}[t]
\centering
\includegraphics[width=0.9\textwidth]{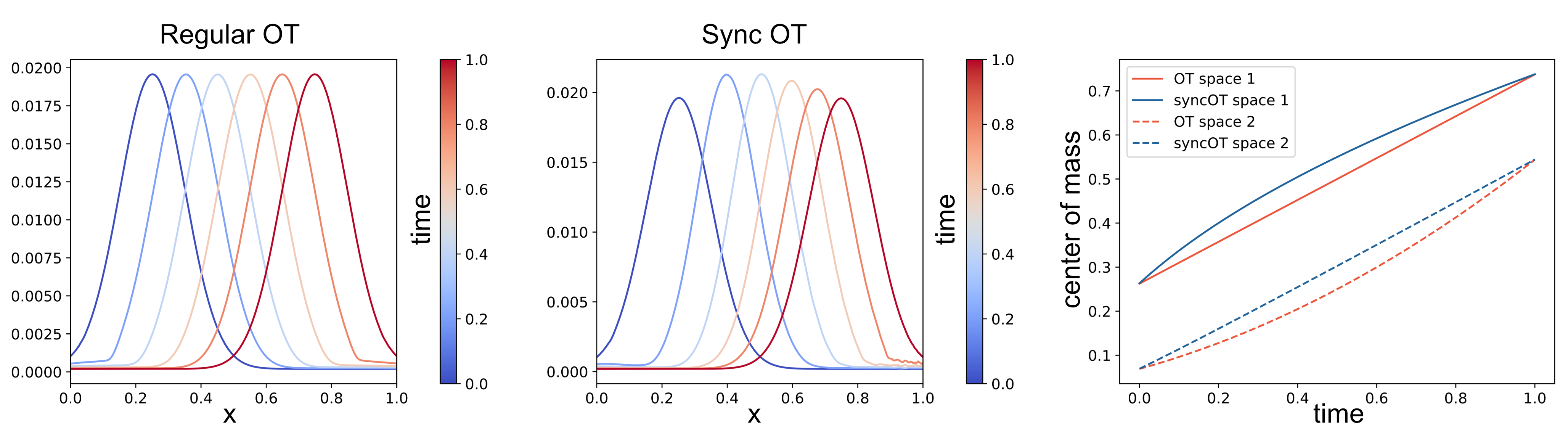}
\caption{A 1D example of SyncOT. The dynamics in  $X^{(1)}$ are shown for regular OT (left) and SyncOT (middle) with $\alpha_1=0.1,\alpha_2=0.9$. Regular OT achieves a constant-speed trajectory in $X^{(1)}$ but not in $X^{(2)}$. In contrary, SyncOT achieves a nearly constant-speed trajectory in space $x^{(2)}$ due to the large weight assigned to $X^{(2)}$. The two spaces are connected through the map $\mathbf{T}(x)=x^2$.}\label{fig:1d_example}
\end{figure}

In this work, we consider predefined maps $\mathcal{T}^{(i)}$ based on known correspondence between $X^{(1)}$ and $X^{(i)}$, such as known couplings between the datasets representing the supporting spaces. Specifically, we consider two commonly encountered forms of $\mathcal{T}^{(i)}$:
\vspace{-.1in}
\begin{itemize}
\item {\it Monge form}: $\mathcal{T}^{(i)} = \mathbf{T}_{\sharp}^{(i)}$ for a known map $\mathbf{T}^{(i)}:X^{(1)}\rightarrow X^{(i)}$. For instance, the map can be obtained by a trained neural network, then one can use the push-forward operator to define $\rho^{(i)} = \mathbf{T}^{(i)}_\sharp\rho^{(1)}$.
\vspace{-.1in}
\item {\it Kantorovich form}: $\mathcal{T}^{(i)} = \int_{\Omega^{(i)}}\text{d}\pi$ for a known joint distribution $\pi\in\mathcal{M}_+(X^{(1)}\times X^{(i)})$. For instance, $\pi$ can be obtained by solving Kantorovich OT on the two datasets representing $X^{(1)}$ and $X^{(i)}$, then one can define $\rho^{(i)}=\int_{\Omega^{(1)}}\rho^{(1)} \mathrm{d}\pi $.
\end{itemize}
\vspace{-.1in}

For the sake of brevity, we focus on the two-space case $d=2$ from now on. The extension of the theories and numerical methods for SyncOT to the general $d$-space case is straightforward. In what follows, we will consider the two-space SyncOT in both Monge and Kantorovich forms.

\subsection{Synchronized Optimal Transport in Monge Form}\label{subsection:SyncOT_Monge}

With a slight abuse of notation, we consider the Monge SyncOT as follows:
\begin{equation}\label{eqn:SynOT_2Spaces}
\begin{aligned}
  &C_{\text{M}}(\mu,\nu; \boldsymbol{\upalpha}) := \inf_{(\rho,\mathbf{u};\xi,\bv)} \int_0^1 \left( \alpha_1 \int_{X} \rho \|\mathbf{u}\|^2 \ \text{d}\mathbf{x}   + \alpha_2 \int_{Y} \xi \|\mathbf{v}\|^2 \ \text{d}\mathbf{y}  \right)\text{d}t, \\
  \text{s.t.} \quad  &\dot{\rho} = -\nabla\cdot(\rho\mathbf{u}), \quad \rho(\mathbf{x},0) = \mu, \ \rho(\mathbf{x},1) = \nu, \quad \bu\cdot\mathbf{n}|_{\partial X} = 0, \\
  & \dot{\xi} = -\nabla\cdot(\xi \mathbf{v}), \quad \xi(\cdot, t) = \mathbf{T}_{\sharp}\rho(\cdot, t),\quad \bv\cdot\mathbf{n}|_{\partial Y} = 0.
\end{aligned}
\end{equation}
where $\boldsymbol\upalpha = (\alpha_1,\alpha_2)\in\Sigma_2$. The density $\rho(\mathbf{x},t)$ represents an evolving distribution in the primary space $X$. The dynamics $\rho(\mathbf{x},t)$ in the primary space induces a dynamics $\xi(\bx,t)$ in the secondary space $Y$ through map $\mathbf{T}_{\sharp}:\mathcal{M}_+(X)\rightarrow\mathcal{M}_+(Y)$, with the assumption of marginal compatibility condition that $\mathbf{T}$ is a diffeomorphism such that 
\begin{align}
    \xi(\cdot,0) = \mathbf{T}_{\sharp}\mu, \quad 
    \xi(\cdot,1) = \mathbf{T}_{\sharp}\nu.
\end{align}
The condition of $\mathbf{T}$ being diffeomorphism is necessary to map the boundary of $X$ to the boundary of $Y$ such that the boundary conditions of $\bu$ and $\bv$ are both satisfied.

To numerically solve the SyncOT in Monge form (\ref{eqn:SynOT_2Spaces}), we recast the problem into a single-space dynamical OT form where the ground metric depends on $\mathbf{T}$. We summarize the equivalence as a theorem.

\begin{theorem}
The synchronized optimal transport (\ref{eqn:SynOT_2Spaces}) is equivalent to the following generalized dynamical OT:
\begin{equation}
  \begin{aligned}\label{eqn:MongeSyncOT_02}
      &\min_{(\rho,\mathbf{u})} \ \int_0^1 \int_{X} \rho\left\|\mathbf{u}\right\|^2_{A}\ \emph{d}\mathbf{x}\emph{d}t \\
      \emph{s.t.} \quad  &\dot{\rho} = -\nabla\cdot(\rho\mathbf{u}), \quad \rho(\mathbf{x},0) = \mu, \ \rho(\mathbf{x},1) = \nu, \quad \bu\cdot\mathbf{n}|_{\partial X} = 0,
  \end{aligned}
\end{equation}
where $A = \alpha_1 I+\alpha_2(\nabla \mathbf{T})^T(\nabla \mathbf{T})$, and $\|\mathbf{u}\|_{A}^2 := \mathbf{u}^T A\mathbf{u}$.
\end{theorem}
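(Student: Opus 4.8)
The plan is to eliminate the secondary‑space variables $(\xi,\mathbf{v})$ from \eqref{eqn:SynOT_2Spaces} by transporting them back to the primary space $X$ through the diffeomorphism $\mathbf{T}$, so that the sum of the two kinetic energies collapses into a single anisotropic kinetic energy with weight $A$. The first ingredient is the velocity correspondence: writing $\dot\rho=-\nabla\cdot(\rho\mathbf{u})$ in weak form and testing against $\psi\circ\mathbf{T}$ for an arbitrary smooth $\psi$ on $Y$, using the pushforward identity $\int_Y\psi\,\xi\,\mathrm{d}\mathbf{y}=\int_X(\psi\circ\mathbf{T})\,\rho\,\mathrm{d}\mathbf{x}$, the no‑flux boundary condition on $\mathbf{u}$, and the chain rule $\nabla(\psi\circ\mathbf{T})(\mathbf{x})=(\nabla\mathbf{T}(\mathbf{x}))^{T}(\nabla\psi)(\mathbf{T}(\mathbf{x}))$, one finds that $\xi=\mathbf{T}_\sharp\rho$ solves its own continuity equation $\dot\xi=-\nabla\cdot(\xi\mathbf{v})$ precisely with the \emph{induced} field $\mathbf{v}=(\nabla\mathbf{T}\,\mathbf{u})\circ\mathbf{T}^{-1}$. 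In other words, the synchronization constraint $\xi=\mathbf{T}_\sharp\rho$ makes $\mathbf{v}$ the $\mathbf{T}$‑image of $\mathbf{u}$ rather than a free variable.

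Next I would change variables $\mathbf{y}=\mathbf{T}(\mathbf{x})$ in the secondary kinetic energy. Since $\mathbf{T}$ is a diffeomorphism, the pushforward density obeys $\xi(\mathbf{T}(\mathbf{x}),t)\,|\det\nabla\mathbf{T}(\mathbf{x})|=\rho(\mathbf{x},t)$ while $\mathrm{d}\mathbf{y}=|\det\nabla\mathbf{T}(\mathbf{x})|\,\mathrm{d}\mathbf{x}$, so the two Jacobian factors cancel and
\[
\int_{Y}\xi\,\|\mathbf{v}\|^{2}\,\mathrm{d}\mathbf{y}=\int_{X}\rho\,\|\nabla\mathbf{T}\,\mathbf{u}\|^{2}\,\mathrm{d}\mathbf{x}=\int_{X}\rho\,\mathbf{u}^{T}(\nabla\mathbf{T})^{T}(\nabla\mathbf{T})\,\mathbf{u}\,\mathrm{d}\mathbf{x}.
\]
Adding $\alpha_1$ times the primary energy $\int_X\rho\|\mathbf{u}\|^2\,\mathrm{d}\mathbf{x}$ to $\alpha_2$ times this expression gives $\int_X\rho\,\mathbf{u}^{T}A\,\mathbf{u}\,\mathrm{d}\mathbf{x}=\int_X\rho\,\|\mathbf{u}\|_A^2\,\mathrm{d}\mathbf{x}$ with $A=\alpha_1 I+\alpha_2(\nabla\mathbf{T})^{T}(\nabla\mathbf{T})$, so the integrands — and hence the objective functionals — of \eqref{eqn:SynOT_2Spaces} and \eqref{eqn:MongeSyncOT_02} coincide along corresponding pairs.

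It then remains to check that the feasible sets match. The primary continuity equation, the endpoint data $\rho(\cdot,0)=\mu$ and $\rho(\cdot,1)=\nu$, and the condition $\mathbf{u}\cdot\mathbf{n}|_{\partial X}=0$ are identical in both problems. On the secondary side, $\xi=\mathbf{T}_\sharp\rho$ together with the marginal compatibility assumption yields $\xi(\cdot,0)=\mathbf{T}_\sharp\mu$ and $\xi(\cdot,1)=\mathbf{T}_\sharp\nu$ automatically; and because $\mathbf{T}$ is a diffeomorphism that carries $\partial X$ onto $\partial Y$, its Jacobian sends the tangent space of $\partial X$ at $\mathbf{x}$ onto that of $\partial Y$ at $\mathbf{T}(\mathbf{x})$, so $\mathbf{u}\cdot\mathbf{n}|_{\partial X}=0$ is equivalent to $\mathbf{v}\cdot\mathbf{n}|_{\partial Y}=0$ for the induced $\mathbf{v}$. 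Consequently every feasible $(\rho,\mathbf{u})$ of \eqref{eqn:MongeSyncOT_02} lifts to a feasible quadruple $(\rho,\mathbf{u};\xi,\mathbf{v})$ of \eqref{eqn:SynOT_2Spaces} with the same cost, and conversely; the two infima, and the minimizing $(\rho,\mathbf{u})$, therefore agree.

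The step I expect to be the main obstacle is the velocity correspondence and the conventions surrounding it: one must argue carefully that the constraint $\xi=\mathbf{T}_\sharp\rho$ — not merely the existence of some $\mathbf{v}$ solving the secondary continuity equation — forces $\mathbf{v}=(\nabla\mathbf{T}\,\mathbf{u})\circ\mathbf{T}^{-1}$ on $\operatorname{supp}\xi$, so that the $\mathbf{v}$‑minimization cannot independently lower the cost; and one must be consistent about the Jacobian $|\det\nabla\mathbf{T}|$ and orientation, invertibility of $\nabla\mathbf{T}$ being exactly what makes $\mathbf{T}_\sharp$ a bijection between measures on $X$ and on $Y$ and the change of variables reversible. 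Once this is in place the rest is the routine substitution above.
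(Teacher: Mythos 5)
Your proof follows the paper's approach: test both continuity equations against $\psi\circ\mathbf{T}$ to derive the induced velocity relation $\mathbf{v}\circ\mathbf{T}=(\nabla\mathbf{T})\,\mathbf{u}$, change variables via the pushforward $\xi=\mathbf{T}_\sharp\rho$ to pull the secondary kinetic energy back to $X$, and combine the two terms into $\int_X\rho\|\mathbf{u}\|_A^2$. The extra Jacobian bookkeeping, the explicit check that boundary conditions and endpoint data match, and your flag that the weak identity only determines $\mathbf{v}$ on $\operatorname{supp}\xi$ up to a divergence-free correction are sensible elaborations of the same core argument (the last being a genuine subtlety that the paper also passes over without comment).
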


\begin{proof}
    To prove the equivalence, the key step is to calculate the kinetic energy in the secondary space. Indeed,
    \begin{align}\label{eqn:Tpush_eqn}
       \int_{Y} \xi (\mathbf{y},t) \|\bv(\by,t)\|^2 \text{d}\mathbf{y}
        = \int_{X} \rho(\bx,t) \|\bv(\mathbf{T}(\mathbf{x}),t)\|^2 \text{d}\mathbf{x}.
    \end{align}
    We replace $\bv$ in terms of $\bu$. Note that $\bv$ satisfies the transport equation
    \[
    \frac{\partial}{\partial t} \xi(\mathbf{y},t) + \nabla\cdot\left(\xi(\mathbf{y},t)\bv(\mathbf{y},t)\right) = 0,
    \]
    in the distributional sense, hence for any test function $\phi\in C_{c}^{\infty}(Y)$, it holds that
    \[
    \int_{Y} \frac{\partial}{\partial t} \xi(\mathbf{y},t) \phi(\mathbf{y}) \text{d}\mathbf{y}
    = - \int_{Y} \nabla\cdot\left(\xi(\mathbf{y},t)\bv(\mathbf{y},t)\right)\phi(\mathbf{y}) \text{d}\mathbf{y}.
    \]
    From the above equation, we have
    \begin{align*}
      \text{LHS} &= \frac{\text{d}}{\text{d}t} \int_{Y} \xi(\mathbf{y},t) \phi(\mathbf{y})  \text{d}\mathbf{y}  = \frac{\text{d}}{\text{d}t} \int_{X} \rho(\mathbf{x},t) \phi(\bT(\mathbf{x})) \text{d}\mathbf{x} \\
      & = \int_{X} \frac{\partial}{\partial t} \rho(\mathbf{x},t) \phi(\bT(\mathbf{x}))  \text{d}\mathbf{x} = -\int_{X} \nabla\cdot \left(\rho(\mathbf{x},t)\bu(\mathbf{x},t)\right)\phi(\bT(\mathbf{x})) \text{d}\mathbf{x} \\
      & = \int_{X}\nabla(\phi(\bT(\mathbf{x})))\cdot \left(\rho(\mathbf{x},t)\bu(\mathbf{x},t)\right)\text{d}\mathbf{x} = \int_{X} \Big[(\nabla\phi)(\bT(\mathbf{x}))\Big]^T (\nabla \bT) \bu(\mathbf{x},t)\rho(\mathbf{x},t)\text{d}\mathbf{x}, \\
    \intertext{\text{and}}
      \text{RHS} & = \int_{Y} \nabla\phi(\mathbf{y})\cdot \left(\xi(\mathbf{y},t)\bv(\mathbf{y},t)\right)\text{d}\mathbf{y} = \int_{Y} [\nabla\phi(\mathbf{y})]^T \bv(\mathbf{y},t)  \xi(\mathbf{y},t) \text{d}\mathbf{y} \\
      & = \int_{X} \Big[(\nabla\phi)(\bT(\mathbf{x}))\Big]^T \bv(\bT(\mathbf{x}),t) \rho(\mathbf{x},t) \text{d}\mathbf{x}.
    \end{align*}
    Since $\phi$ is arbitrary, it holds that
    \begin{align}\label{eqn:v_relation}
        \bv(\bT(\mathbf{x}),t) = (\nabla \bT) \bu(\mathbf{x},t).
    \end{align}
    Finally, inserting the relation (\ref{eqn:v_relation}) between $\bv$ and $\bu$ into (\ref{eqn:Tpush_eqn}), we have
    \begin{align*}
      \int_{Y} \xi (\mathbf{y},t) \|\bv(\by,t)\|^2 \text{d}\mathbf{y}
      = \int_{X} \rho(\bx,t) \|(\nabla \bT) \bu(\mathbf{x},t)\|^2 \text{d}\mathbf{x}
        &= \int_{X} \rho(\bx,t) \big\| \bu(\mathbf{x},t) \big\|^2_{(\nabla \bT)^T(\nabla \bT)} \text{d}\mathbf{x}.
    \end{align*}
    The proof is therefore completed.
\end{proof}

\subsection{Synchronized Optimal Transport in Kantorovich Form}\label{subsection:SyncOT_Kantorovich}

We consider the Kantorovich SyncOT as follows:
\begin{equation}\label{eqn:KantorovichSynOT}
  \begin{aligned}
    &C_{\text{K}}(\mu,\nu; \boldsymbol{\upalpha}) := \inf_{(\rho,\mathbf{u};\bv)} \int_0^1 \left( \alpha_1 \int_{X} \rho \|\mathbf{u}\|^2 \ \text{d}\mathbf{x}   + \alpha_2 \int_{Y} \xi \|\mathbf{v}\|^2 \ \text{d}\mathbf{y}  \right)\text{d}t, \\
    \text{s.t.} \quad  &\dot{\rho} = -\nabla\cdot(\rho\mathbf{u}), \quad \rho(\mathbf{x},0) = \mu, \ \rho(\mathbf{x},1) = \nu, \quad \bu\cdot\mathbf{n}|_{\partial X} = 0; \\
    & \dot{\xi} = -\nabla\cdot(\xi\mathbf{v}), \quad \xi(\mathbf{y}, t) = \int_{X} \rho(\mathbf{x}, t) \text{d}\pi, \quad \bv\cdot\mathbf{n}|_{\partial Y} = 0.
  \end{aligned}
\end{equation}
Here $\pi\in\mathcal{M}_+(X \times Y)$ is the given joint distribution between $X$ and $Y$. Note that the Kantorovich SyncOT minimizes the total kinetic energy over $(\rho,\mathbf{u};\mathbf{v})$ but not $\xi$. This is because $\xi$ is determined by $\rho$ through the Kantorovich map $\xi = \int_X \rho \text{d}\pi$. Similar to the Monge case, we assume the marginal compatibility condition that the joint distribution $\pi \ll \text{d}x\text{d}y$ satisfies 
\begin{align}
    \xi(\mathbf{y},0) = \int_{X} \mu(\mathbf{x}) \text{d}\pi, \quad 
    \xi(\mathbf{y},1) = \int_{X} \nu(\mathbf{x}) \text{d}\pi.
\end{align}

Unlike the Monge SyncOT which has an equivalent form (\ref{eqn:MongeSyncOT_02}) that only depends on the primary space and the transform $\bT$, it is not straightforward to rewrite the dynamics in the secondary space in terms of the variables $(\rho, \bu)$ in the primary space. An alternative way is needed to reformulate the Kantorovich SyncOT (\ref{eqn:KantorovichSynOT}) for feasible numerical implementation. To this end, we introduce some mild conditions on the feasible density $\rho(\mathbf{x},t)$. Specifically, we assume that any feasible density $\rho(\mathbf{x},t)$ is absolutely continuous with respect to Lebesgue measure $\text{d}x$, namely, $\rho(t,\cdot) \ll \text{d}x$ for any $t\in[0,1]$. Under this assumption, the Monge OT (\ref{eqn:MongeOT}) and  Kantorovich OT (\ref{eqn:KantorovichOT}) between $\rho(t_1,\cdot)$ and $\rho(t_2,\cdot)$ are equivalent for $0\le t_1 \le t_2 \le 1$ by Brenier's theorem \cite{brenier1991polar}. In addition, when two distributions $\rho(t,\cdot)$ and $\rho(t+\text{d}t,\cdot)$ are infinitesimally close to each other, it holds that \cite{neklyudov2023action,gangbo1996geometry,ambrosio2005gradient,villani2009optimal}
\begin{align}
  &\ W_2^2(\rho(t,\cdot),\rho(t+\text{d}t,\cdot)) \\
  =&\ \inf_{\mathbf{T}} \left\{\int_X \|\mathbf{T}(\mathbf{x})-\mathbf{x}\|^2 \rho(t,\mathbf{x})\text{d}\mathbf{x}: T_{\sharp} \rho(t,\cdot) = \rho(t+\text{d}t,\cdot) \right\} \\
  =&\ \inf_{\mathbf{v}} \left\{ \int_X\int_{t}^{t+\text{d}t} \rho(t,\mathbf{x})\|\mathbf{v}\|^2 \text{d}\mathbf{x}\text{d}t: (\mathbf{x}+\text{d}t\mathbf{v})_{\sharp}\rho(t,\cdot) = \rho(t+\text{d}t,\cdot)   \right\}.
\end{align}
Note that the given joint distribution $\pi$ is absolutely continuous with respect to the Lebesgue measure, therefore we can use the Riemann sum approximation to reformulate the Kantorovich SyncOT as follows
\begin{equation}\label{eqn:KantorovichSynOT_02}
  \begin{aligned}
    &C_{\text{K}}(\mu,\nu; \boldsymbol{\upalpha}) \approx \inf_{(\rho,\mathbf{u})}\  \alpha_1 \int_0^1 \int_{X} \rho \|\mathbf{u}\|^2 \ \text{d}\mathbf{x}  + \alpha_2 \sum_{i=0}^{N-1} \frac{1}{\Delta t_i} W_2^2 \Big( \xi(\mathbf{y},t_i), \xi(\mathbf{y},t_{i+1}) \Big), \\
    \text{s.t.} \quad  &\dot{\rho} = -\nabla\cdot(\rho\mathbf{u}), \ \rho(\mathbf{x},0) = \mu, \ \rho(\mathbf{x},1) = \nu, \ \bu\cdot\mathbf{n}|_{\partial X} = 0, \ \xi(\mathbf{y}, t) = \int_{X} \rho(\mathbf{x}, t) \text{d}\pi,
  \end{aligned}
\end{equation}
where $0\le t_0 < t_1 < \cdots < t_{N-1} < t_N = 1$ is a Riemann sum partition for the time interval $[0,1]$ and  $\Delta t_i = t_{i+1} - t_{i}$.



\section{Numerical Methods for Synchronized Optimal Transport in Monge Form}\label{section:Numerical_Monge} 

In this section, we present numerical methods for solving the SyncOT problem in Monge form. For simplicity, we focus on the 2-dimensional case and take the primary space $X=[0,1]^2$. The secondary space $Y$ depends on the transform $\bT$.

To begin with, we follow \cite{benamou2000computational} to introduce the change of variable $(\bu,\rho) \rightarrow (\bfm,\rho)$ where $\bfm$ is the momentum $\bfm = \rho \bu$. Then the SyncOT problem can be recast into a convex optimization problem over the pair $(\bfm,\rho)$:
\begin{align}\label{eqn:SyncOT_momentum}
\min_{(\bfm,\rho)\in\calC} \mathcal{J}(\bfm,\rho) = \int_0^1 \int_{X} J(\bfm(\bx,t),\rho(\bx,t)) \ \text{d}\bx\text{d}t
\end{align}
where the set of constraints is defined as
\begin{align}\label{eqn:SyncOT_momentum_constraint}
  \mathcal{C}(\mu,\nu): = \left\{ (\bfm,\rho): \dot{\rho} + \nabla\cdot\bfm = 0, \ \rho(\bx,0)=\mu, \rho(\bx,1)=\nu \right\},
\end{align}
and $J$ is the following lower semi-continuous convex function
\begin{align}\label{eqn:J_definition}
  J(\bfm,\rho) = 
  \begin{cases}
    \rho^{-1}\|\bfm\|_A^2 \quad &\text{if\ } \rho >0, \\
    0 \quad &\text{if\ } (\bfm,\rho) = (0,0), \\
    +\infty, \quad & \text{otherwise}.
  \end{cases}
\end{align}
for a positive-definite matrix $A = \alpha_1 I + \alpha_2(\nabla \mathbf{T})^T(\nabla \mathbf{T})$.

\subsection{Discretization on Uniform Staggered Grids}\label{subsection:Staggered}

In this section, we introduce an uniform staggered grid for the discretization of the SyncOT (\ref{eqn:SyncOT_momentum}). Uniform staggered grids are commonly used in fluid dynamics \cite{harlow1965numerical}. See a schematic illustration in Figure \ref{fig:my_label}. To begin with, we denote by $M+1, N+1, Q+1$ the numbers of uniform staggered nodes along $\bx = (x, y)\in[0,1]^2$ and time $t\in[0,1]$. The corresponding grid sizes are $\Delta x = \frac{1}{M}, \Delta{y} = \frac{1}{N}, \Delta{t} = \frac{1}{Q}$.

The following index sets of staggered grids are defined:
\begin{align}
  \mathcal{S}_{\text{s}}^{x} &= \{ (i,j,k):\ 0\le i \le M, \ 0\le j \le N-1, \ 0\le k \le Q-1 \}, \\
  \mathcal{S}_{\text{s}}^{y} &= \{ (i,j,k):\ 0\le i \le M-1, \ 0\le j \le N, \ 0\le k \le Q-1 \}, \\ 
  \mathcal{S}_{\text{s}}^{t} &= \{ (i,j,k):\ 0\le i \le M-1, \ 0\le j \le N-1, \ 0\le k \le Q \}.
\end{align}
The staggered nodes over $(\bx,t) \in [0,1]^2\times [0,1]$  are given as
\begin{align}
  \mathcal{G}_{\text{s}}^x &= \left\{  (x_{i-\frac{1}{2}}, y_j, t_k) = \left( i\Delta x, \left(j + 1/2 \right)\Delta y, \left(k + 1/2 \right)\Delta t \right), \ (i,j,k) \in \mathcal{S}_{\text{s}}^{x}   \right\}, \\
  \mathcal{G}_{\text{s}}^y &= \left\{  (x_{i}, y_{j-\frac{1}{2}}, t_k) = \left( \left(i + 1/2\right)\Delta x, j\Delta y, \left(k + 1/2 \right)\Delta t \right), \ (i,j,k) \in \mathcal{S}_{\text{s}}^{y}    \right\}, \\
  \mathcal{G}_{\text{s}}^t &= \left\{  (x_{i}, y_{j}, t_{k-\frac{1}{2}}) = \left( \left(i + 1/2\right)\Delta x, \left(j + 1/2\right)\Delta y, k \Delta t \right), \ (i,j,k) \in \mathcal{S}_{\text{s}}^{t}   \right\}.
\end{align}
The discrete functions defined on the staggered grid $\mathcal{G}_{\text{s}}^{q}, q = x,y,t$ are denoted by $\mathcal{M}(\mathcal{G}_{\text{s}}^{q})$, namely, for $f^{q}\in \mathcal{M}(\mathcal{G}_{\text{s}}^{q}), q = x, y, t$,
\begin{align}
  f^x = \left( f^x_{i-\frac{1}{2},j,k} \right)_{(i,j,k) \in \mathcal{S}_{\text{s}}^{x} }, \quad 
  f^y = \left( f^y_{i,j-\frac{1}{2},k} \right)_{(i,j,k) \in \mathcal{S}_{\text{s}}^{y} }, \quad
  f^t = \left( f^t_{i,j,k-\frac{1}{2}} \right)_{(i,j,k) \in \mathcal{S}_{\text{s}}^{t} }.
\end{align}
We further denote the discrete functions defined on $\mathcal{G}_{\text{s}}^{x} \times \mathcal{G}_{\text{s}}^{y} \times \mathcal{G}_{\text{s}}^{t}$ as
\begin{align*}
  U_{\text{s}} = (\bfm_{\text{s}},\rho_{\text{s}}) = (m_{\text{s}}, n_{\text{s}},\rho_{\text{s}}) = (m_{i-\frac{1}{2},j,k}, n_{i,j-\frac{1}{2},k}, \rho_{i,j,k-\frac{1}{2}})\in \mathcal{D}_{\text{s}}= \mathcal{M}(\mathcal{G}_{\text{s}}^{x}) \times \mathcal{M}(\mathcal{G}_{\text{s}}^{y}) \times \mathcal{M}(\mathcal{G}_{\text{s}}^{t}).
\end{align*}





  



\begin{figure}[t]
  \centering
  \includegraphics[width=0.4\textwidth]{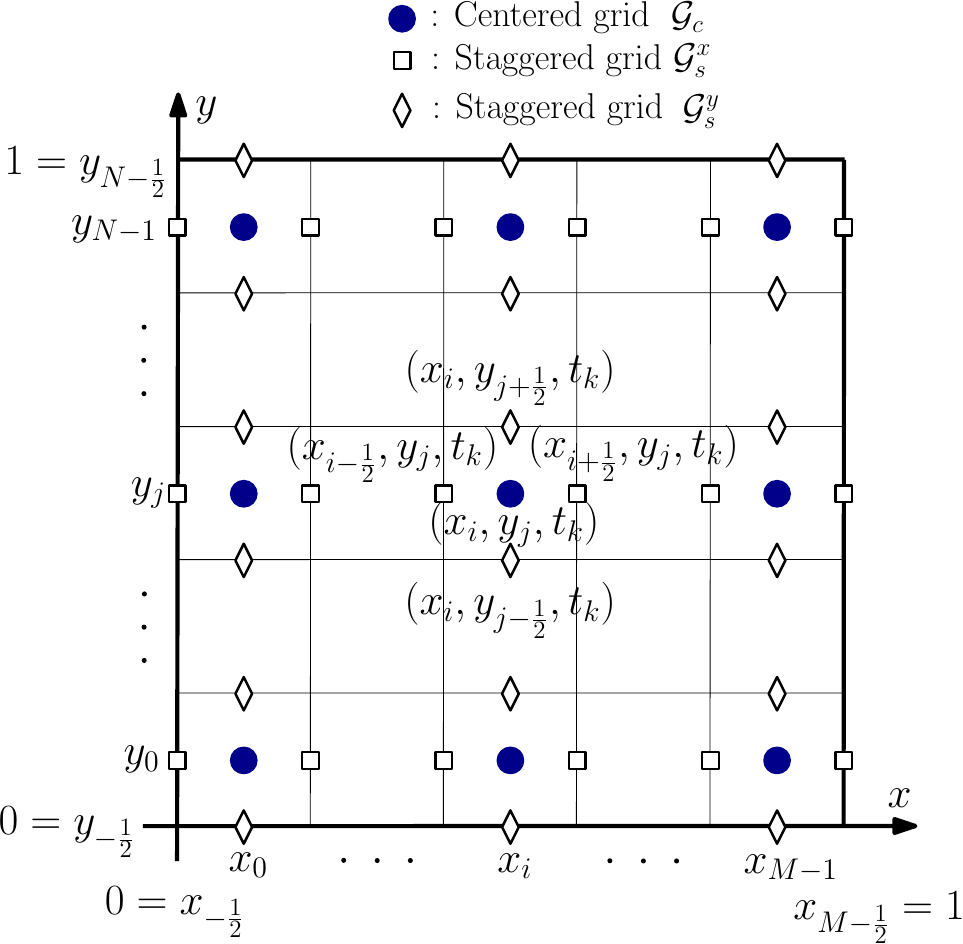} \qquad
  \includegraphics[width=0.4\textwidth]{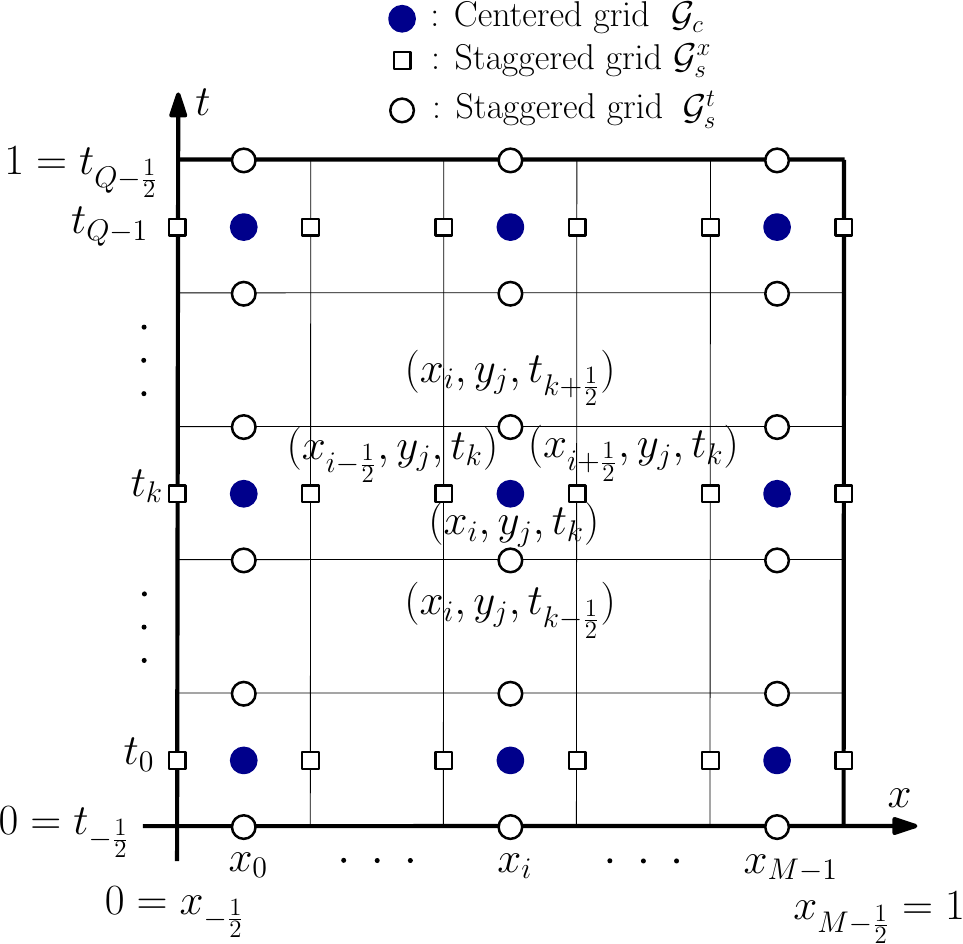}
  \caption{Schematic of the staggered grid. Left: centered and staggered grids on the $t_k$-slice; Right: centered and staggered grids on the $y_j$-slice.}
  \label{fig:my_label}
\end{figure}

We also define the index set of centered grid as
\begin{align}
  \mathcal{S}_{\text{c}} = \{ (i,j,k)\in\mathbb{Z}^3:\  0\le i \le M-1,\  0\le j \le N-1,\  0\le k \le Q-1 \}.
\end{align}
Then a centered grid discretization over $(\bx,t) \in [0,1]^2\times [0,1]$ is given as
\begin{align}
  \mathcal{G}_{\text{c}} = \left\{  (x_i, y_j, t_k) = \left( (i+1/2) \Delta x, (j+1/2)\Delta y, (k+1/2)\Delta t \right),\  (i,j,k) \in \mathcal{S}_{\text{c}}  \right\}.
\end{align}
The discrete functions defined on the centered grid $\mathcal{G}_{\text{c}}$ are denoted by $\mathcal{M}(\mathcal{G}_{\text{c}})$. The SyncOT variable $(\bfm,\rho)$ discretized over $\mathcal{G}_{\text{c}} \times \mathcal{G}_{\text{c}} \times \mathcal{G}_{\text{c}}$ is denoted by
\begin{align}
  U_{\text{c}} = (\bfm_{\text{c}},\rho_{\text{c}})  = (m_{\text{c}}, n_{\text{c}},\rho_{\text{c}}) = (m_{ijk}, n_{ijk}, \rho_{ijk}) \in \mathcal{D}_{\text{c}} = \mathcal{M}(\mathcal{G}_{\text{c}}) \times \mathcal{M}(\mathcal{G}_{\text{c}}) \times \mathcal{M}(\mathcal{G}_{\text{c}}).
\end{align}

We further introduce a midpoint interpolation operator $\mathcal{I}: \mathcal{D}_{\text{s}} \rightarrow \mathcal{D}_{\text{c}}$. For $U_{\text{s}} = (\bfm_{\text{s}},\rho_{\text{s}}) \in \mathcal{D}_{\text{s}}$, we define $\mathcal{I}(U_{\text{s}}) = (\bfm_{\text{c}},\rho_{\text{c}}) \in \mathcal{D}_{\text{c}}$ as follows:
\begin{align}
  \begin{cases}
  m_{ijk} = \frac{1}{2}( m_{i-\frac{1}{2},j,k} + m_{i+\frac{1}{2},j,k}), \\ 
  n_{ijk} = \frac{1}{2}( n_{i,j-\frac{1}{2},k} + n_{i,j+\frac{1}{2},k}), \\
  \rho_{ijk} = \frac{1}{2}( \rho_{i,j,k-\frac{1}{2}} + \rho_{i,j,k+\frac{1}{2}}),
  \end{cases}
  \quad (i,j,k)\in\mathcal{S}_{\text{c}}.
\end{align}
Then the space-time divergence operator $\div: \mathcal{D}_{\text{s}} \rightarrow \mathcal{M}(\mathcal{G}_{\text{c}}) $ is defined for $U_{\text{s}} = (\bfm_{\text{s}},\rho_{\text{s}}) \in \mathcal{D}_{\text{s}}$:
\begin{align}
  \div(U_{\text{s}})_{ijk} =  \frac{m_{i+\frac{1}{2},j,k} - m_{i-\frac{1}{2},j,k}}{\Delta x}  + 
                              \frac{ n_{i,j+\frac{1}{2},k} - n_{i, j-\frac{1}{2},k} }{\Delta y}  +
                               \frac{\rho_{i,j,k+\frac{1}{2}} - \rho_{i,j,k-\frac{1}{2}} }{\Delta t},
\end{align}
for $(i,j,k)\in\mathcal{S}_{\text{c}}$.

To handle the boundary conditions on the staggered grids, we introduce the linear operator $b$ for $U_{\text{s}} = (\bfm_{\text{s}},\rho_{\text{s}}) \in \mathcal{D}_{\text{s}}$ as
\begin{align*}
  b(U_{\text{s}}) = \left( \left(m_{-\frac{1}{2},j,k}, m_{M-\frac{1}{2},j,k}\right)_{j=0:N-1}^{k=0:Q-1}, 
                          \left(n_{i,-\frac{1}{2},k}, n_{i,N-\frac{1}{2},k}\right)_{i=0:M-1}^{k=0:Q-1},
                          \left( \rho_{i,j,-\frac{1}{2}}, \rho_{i,j,Q-\frac{1}{2}}\right)_{i=0:M-1}^{j=0:N-1}
                          \right),
\end{align*}
and impose the boundary conditions
\begin{align*}
  b(U_{\text{s}}) = b_0 : = ((0,0),(0,0),(\mu_{\text{c}}, \nu_{\text{c}}))\in (\mathbb{R}^{NQ})^2 \times (\mathbb{R}^{MQ})^2 \times (\mathbb{R}^{MN})^2,
\end{align*}
where $\mu_{\text{c}}, \nu_{\text{c}} \in \mathbb{R}^{MN}$ are the discretized initial and terminal densities over centered grid $\{ (x_i,y_j) = ( (i+1/2)\Delta x, (j+1/2)\Delta y ),\ i=0:M-1, j=0:N-1 \}$.

Finally, the SyncOT problem  (\ref{eqn:SyncOT_momentum}) is approximated by solving the following finite-dimensional convex problem over the staggered grid:
\begin{align}\label{eqn:SyncOT_discretized}
  \min_{U_{\text{s}}\in \mathcal{D}_{\text{s}}} \mathcal{J}(\mathcal{I}(U_{\text{s}})) + \iota_{\mathcal{C}}(U_{\text{s}}),
\end{align}
where, with a slight abuse of notation, we take
\begin{align}\label{eqn:J_discretized}
  \mathcal{J}(U_{\text{c}}) = \sum_{(i,j,k)\in\mathcal{S}_{\text{c}}} J(m_{ijk}, n_{ijk}, \rho_{ijk} ), \quad U_{\text{c}} = (\bfm_{\text{c}},\rho_{\text{c}}) \in \mathcal{D}_{\text{c}},
\end{align}
and $\iota_{\mathcal{C}}(U_{\text{s}})$ is the indicator function
\begin{align*}
  \iota_{\mathcal{C}}(U_{\text{s}}) = 
  \begin{cases}
     0, \ & \text{if\ } U_{\text{s}} \in \mathcal{C}, \\
     +\infty, \ & \text{otherwise}.
  \end{cases}
\end{align*}
with $\mathcal{C}$ being defined as
\begin{align}\label{eqn:C}
  \mathcal{C} = \{ U_{\text{s}}\in\mathcal{D}_s: \ \div(U_{\text{s}}) = 0,\  b(U_{\text{s}}) = b_0 \}.
\end{align}

\subsection{Primal-dual Methods for Synchronized Optimal Transport }\label{subsection:Primal_dual}

In this section, we adopt a primal-dual method introduced in \cite{chambolle2011first} which aims to minimize functionals of the form $f\circ K(x) + g(x)$. Here $K$ is a linear operator. $f, g$ are proximal friendly, namely, their corresponding proximal operators are in closed form, or can be effectively computed. We take
\[
x = U_{\text{s}}\in \mathcal{D}_{\text{s}}, \quad f = \mathcal{J}, \quad K = \mathcal{I}, \quad g = \iota_{\mathcal{C}},  
\]
to apply the primal-dual method to the discretized SyncOT problem (\ref{eqn:SyncOT_discretized}).

As an algorithm based on proximal splitting, primal-dual methods require computing the proximal operators for $\mathcal{J}$ and $\iota_{\mathcal{C}}$. In what follows, we show that the cost functional $\mathcal{J}$ in (\ref{eqn:J_discretized}) and the indicator function $\iota_{\mathcal{C}}$ are proximal friendly.

\begin{lemma}[$\text{prox}_{\tau\mathcal{J}}$] \label{lemma:proxJ}
  For the seperable discrete cost functional $\mathcal{J}$ defined in (\ref{eqn:J_discretized}) with $J$ given in (\ref{eqn:J_definition}), the proximal operator $\emph{prox}_{\tau\mathcal{J}}$ can be computed component-wisely,
  \begin{align}
    \emph{prox}_{\tau\mathcal{J}}(U_{\emph{c}}) = \Big\{\emph{prox}_{\tau J}(m_{ijk}, n_{ijk}, \rho_{ijk}) \Big\}_{(i,j,k)\in\mathcal{S}_{\emph{c}}}.
  \end{align}
  Here for any $(\bfm,\rho)\in \mathbb{R}^2\times\mathbb{R}$,
  \begin{align}
    \emph{prox}_{\tau J}(\bfm,\rho) = 
    \begin{cases}
      \big(\bfm^*(\rho^*), \rho^* \big), \quad &\emph{if\ } \rho>0, \\
      (0,0),  \quad  & \emph{otherwise},
    \end{cases} 
  \end{align}
  where $\rho^*$ is the unique fixed point over $(0,\infty)$ for $\phi(\tilde{\rho})$:
  \begin{align}
    \phi(\tilde{\rho}) = \rho +  \tau \big\langle (2\tau A + \tilde{\rho} I)^{-1}\bfm, A (2\tau A + \tilde{\rho} I)^{-1}\bfm \big\rangle,
  \end{align}
  and 
  \begin{align}
    \bfm^*(\rho^*) = (2\tau A + \rho^* I )^{-1} \rho^*\bfm.
  \end{align}

\end{lemma}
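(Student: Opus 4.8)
The plan is to reduce the componentwise proximal computation to a one–dimensional root–finding problem in the density variable and to settle it by a convexity argument.

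First, since $\mathcal{J}$ in \reff{eqn:J_discretized} is a finite sum of copies of $J$ acting on pairwise disjoint coordinate blocks $(m_{ijk},n_{ijk},\rho_{ijk})$, the minimization defining $\mathrm{prox}_{\tau\mathcal{J}}(U_{\mathrm c})=\arg\min_{\tilde U}\{\mathcal{J}(\tilde U)+\frac{1}{2\tau}\|\tilde U-U_{\mathrm c}\|^2\}$ separates into independent blockwise problems, which is exactly the asserted componentwise formula. It then suffices to study, for a single triple $(\bfm,\rho)\in\mathbb{R}^2\times\mathbb{R}$, the minimizer of
\[
  F(\tilde\bfm,\tilde\rho):=J(\tilde\bfm,\tilde\rho)+\tfrac{1}{2\tau}\|\tilde\bfm-\bfm\|^2+\tfrac{1}{2\tau}(\tilde\rho-\rho)^2 .
\]
Because $J$ is convex and lower semicontinuous (as noted after \reff{eqn:J_definition}) while the quadratic penalty is strictly convex and coercive, $F$ is strictly convex and coercive on its convex domain $\{\tilde\rho>0\}\cup\{(0,0)\}$; hence a unique minimizer exists, and the only question is whether it sits at the cone tip $(0,0)$ or in the open region $\{\tilde\rho>0\}$.

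The next step is a nested minimization. For fixed $\tilde\rho>0$, $F(\cdot,\tilde\rho)$ is a strictly convex quadratic in $\tilde\bfm$ with Hessian $\frac{2}{\tilde\rho}A+\frac{1}{\tau}I\succ 0$; its first-order condition $\frac{2}{\tilde\rho}A\tilde\bfm+\frac{1}{\tau}(\tilde\bfm-\bfm)=0$ yields $\tilde\bfm=\bfm^*(\tilde\rho):=(2\tau A+\tilde\rho I)^{-1}\tilde\rho\bfm$, exactly the stated form. Substituting this back, using $(\frac{1}{\tilde\rho}A+\frac{1}{2\tau}I)^{-1}=2\tau\tilde\rho(2\tau A+\tilde\rho I)^{-1}$ and that $A$ commutes with $(2\tau A+\tilde\rho I)^{-1}$, the partially minimized value collapses to
\[
  \psi(\tilde\rho):=\min_{\tilde\bfm}F(\tilde\bfm,\tilde\rho)=\big\langle\bfm,\;A(2\tau A+\tilde\rho I)^{-1}\bfm\big\rangle+\tfrac{1}{2\tau}(\tilde\rho-\rho)^2 .
\]
I would then record three facts (most transparent after diagonalizing $A$): $\psi$ is smooth and strictly convex on $(0,\infty)$, $\psi(\tilde\rho)\to+\infty$ as $\tilde\rho\to\infty$, and $\psi(0^+)=\frac{1}{2\tau}(\|\bfm\|^2+\rho^2)=F(0,0)$, so $\inf_{\tilde\rho>0}\psi\le F(0,0)$.

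Comparing the two candidate minima then hinges on the sign of $\psi'(0^+)=-\frac{1}{4\tau^2}\langle\bfm,A^{-1}\bfm\rangle-\frac{\rho}{\tau}$. When $\rho>0$ this is strictly negative, so strict convexity forces a unique interior minimizer $\rho^*\in(0,\infty)$ of $\psi$ with $\psi(\rho^*)<\psi(0^+)=F(0,0)$; hence the minimizer of $F$ is $(\bfm^*(\rho^*),\rho^*)$, and $\psi'(\rho^*)=0$ rearranges (again by commutativity) into precisely $\rho^*=\rho+\tau\langle(2\tau A+\rho^* I)^{-1}\bfm,\;A(2\tau A+\rho^* I)^{-1}\bfm\rangle=\phi(\rho^*)$. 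Uniqueness of this fixed point over $(0,\infty)$ follows since $\phi$ is strictly decreasing there, so $\phi(\tilde\rho)-\tilde\rho$ is strictly decreasing and passes from $\phi(0^+)>0$ to $-\infty$. When $\rho\le 0$, the argument to make is that $\psi$ is nondecreasing on $(0,\infty)$, so $\inf_{\tilde\rho>0}\psi=F(0,0)$ is not attained and the minimizer is the tip $(0,0)$. I expect this last dichotomy to be the main obstacle: deciding cleanly which branch is active requires the convexity/monotonicity bookkeeping of $\psi$ near $\tilde\rho=0^+$ (the genuinely sharp threshold is $\psi'(0^+)\ge 0$, i.e.\ $\rho+\frac{1}{4\tau}\langle\bfm,A^{-1}\bfm\rangle\le 0$, with $\rho>0$ serving as the convenient sufficient condition for the nontrivial branch), whereas the inner quadratic step and the algebraic collapse to $\psi$ and to $\phi$ are routine.
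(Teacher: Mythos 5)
Your value-function route is a genuine alternative to the paper's argument. The paper writes the joint first-order conditions in $(\tilde{\bfm},\tilde{\rho})$, solves the $\tilde{\bfm}$-equation, and substitutes to get the fixed-point equation for $\phi$; you eliminate $\tilde{\bfm}$ first, reducing to a one-dimensional convex problem in $\tilde{\rho}$ via the partially minimized value $\psi$. The algebra you sketch checks out: since $A$ commutes with $(2\tau A+\tilde{\rho}I)^{-1}$, the inner minimum collapses to $\psi(\tilde{\rho})=\langle\bfm, A(2\tau A+\tilde{\rho}I)^{-1}\bfm\rangle+\tfrac{1}{2\tau}(\tilde{\rho}-\rho)^2$, one has $\psi(0^+)=\tfrac{1}{2\tau}(\|\bfm\|^2+\rho^2)=F(\mathbf{0},0)$, and $\psi'(\tilde{\rho})=0$ is equivalent to $\tilde{\rho}=\phi(\tilde{\rho})$. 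Your route buys a unified treatment of the $\bfm=\mathbf{0}$ and $\bfm\neq\mathbf{0}$ cases, a one-line uniqueness argument from strict convexity of $\psi$, and a transparent criterion for when the minimizer sits at the cone tip.

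The caveat you raise at the end is not mere bookkeeping; it flags a real imprecision in the lemma and its proof. The correct dichotomy is governed by the sign of $\psi'(0^+)=-\tfrac{1}{4\tau^2}\langle\bfm,A^{-1}\bfm\rangle-\tfrac{\rho}{\tau}$: the minimizer is interior, so $\text{prox}_{\tau J}\neq(\mathbf{0},0)$, precisely when $\rho>-\tfrac{1}{4\tau}\langle\bfm,A^{-1}\bfm\rangle$. The paper's claim that $\rho\le 0$ ``evidently'' gives $(\mathbf{0},0)$ is therefore false once $\bfm\neq\mathbf{0}$ and $\rho$ is negative but above that threshold (e.g.\ the one-dimensional case $A=1$, $\tau=1$, $\bfm=100$, $\rho=-1$: the interior candidate beats the tip by a wide margin). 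Since the Chambolle--Pock iteration applies the prox via the Moreau identity to arguments whose $\rho$-component is unconstrained in sign, this regime is reachable. Separately, the paper's existence argument has a small slip: it writes $\phi(0)=\tfrac{1}{4\tau}\langle\bfm,A^{-1}\bfm\rangle$, omitting the summand $\rho$; the correct $\phi(0)=\rho+\tfrac{1}{4\tau}\langle\bfm,A^{-1}\bfm\rangle$ is still positive under the assumption $\rho>0$, so the conclusion stands in that branch. Apart from this discrepancy with the lemma as printed --- which you correctly flag rather than paper over --- your proof is sound.
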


\begin{proof}
   Let $\text{prox}_{\tau J}(\bfm, \rho) = (\bfm^*, \rho^*)$. By the definition of the proximal operator,
   \[
   \text{prox}_{\tau J} (\bfm,\rho) = \underset{(\tilde{\bfm}, \tilde{\rho})}{\mathrm{argmin}} J( \tilde{\bfm}, \tilde{\rho} ) + \frac{1}{2\tau}\| (\bfm,\rho) - (\tilde{\bfm},\tilde{\rho}) \|^2,
   \]
and the definition of $J$ in (\ref{eqn:J_definition}), it is evident that if $0\ge\rho\in\mathbb{R}$, then $\text{prox}_{\tau J} (\bfm,\rho) = (\mathbf{0},0)$. Besides, if $\bfm = \mathbf{0}$ and $\rho>0$, then $\text{prox}_{\tau J} (\mathbf{0},\rho) = (\mathbf{0},\rho)$.

On the other hand, if $\rho>0$ and $\bfm \neq \mathbf{0}$, taking the first order derivative of the objective function for the proximal operator $\text{prox}_{\tau J} (\bfm,\rho)$, it yields
\begin{align}\label{eqn:proxJ_system}
  \begin{cases}
    2\tau \tilde{\rho}^{-1}(A \tilde{\bfm}) + \tilde{\bfm} - \bfm = 0, \\
    -\tau \tilde{\rho}^{-2}(\tilde{\bfm}^T A \tilde{\bfm}) + \tilde{\rho} - \rho = 0.
  \end{cases}
\end{align} 
The first equation in (\ref{eqn:proxJ_system}) leads to
\begin{align}\label{eqn:m_tilde}
  \tilde{\bfm} = (2\tau A + \tilde{\rho} I )^{-1} \tilde{\rho}\bfm.
\end{align}
Inserting the above equation to the second equation in (\ref{eqn:proxJ_system}) gives
\begin{align}\label{eqn:rho_tilde}
\tilde{\rho} = \rho + \tau \frac{\langle \tilde{\bfm}, A\tilde{\bfm} \rangle}{\tilde{\rho}^2} = \rho + \tau \Big\langle (2\tau A + \tilde{\rho} I)^{-1}\bfm, A (2\tau A + \tilde{\rho} I)^{-1}\bfm \Big\rangle.
\end{align}
Let $\phi(\tilde{\rho}) = \rho + \tau \big\langle (2\tau A + \tilde{\rho} I)^{-1}\bfm, A (2\tau A + \tilde{\rho} I)^{-1}\bfm \big\rangle$, we assert that $\phi'(\tilde{\rho})<0$ for $\bfm\neq \mathbf{0}$. Indeed, by taking the derivative,
\begin{align}\label{eqn:phi_prime}
  \phi'(\tilde{\rho}) &= \tau \Big\langle 2A(2\tau A + \tilde{\rho}I)^{-1}\bfm, \ \frac{\text{d}}{\text{d}\tilde{\rho}}(2\tau A + \tilde{\rho}I)^{-1}\bfm \Big\rangle \\
  &= -\tau \Big\langle 2A(2\tau A + \tilde{\rho}I)^{-1}\bfm, \ (2\tau A + \tilde{\rho}I)^{-2}\bfm \Big\rangle,
\end{align}
and noting $A = \alpha_1 I + \alpha_2 \nabla\mathbf{T}^T\nabla\mathbf{T}$ is positive definite, it follows that $\phi'(\tilde{\rho}) < 0 $ for $\bfm \neq \mathbf{0}$. The left hand side of the equation (\ref{eqn:rho_tilde}) is a linearly increasing function over the interval $\tilde{\rho}\in (0,\infty)$, beginning its growth from $0$; in contrast, the right hand side of the equation (\ref{eqn:rho_tilde}) is a strictly decreasing function over the same interval, starting from $\phi(0)=\frac{1}{4\tau}\langle \bfm, A^{-1}\bfm \rangle > 0 $, therefore there exists unique solution $\tilde{\rho} = \rho^*$ over $(0,\infty)$. Inserting the unique $\rho^*$ back to (\ref{eqn:m_tilde}) leads to $\bfm^* = (2\tau A + \rho^* I )^{-1} \rho^*\bfm $.
\end{proof}

Note that Lemma \ref{lemma:proxJ} provides a natural fixed point iteration algorithm to compute $\rho^*$. We summarize it in the following lemma.
\begin{lemma}\label{lemma:iteration_proxJ}
Taking any initial guess $\tilde{\rho}^{(0)} \in (0, \infty)$, the following fixed point iteration for $\phi$
\begin{align}
  \tilde{\rho}^{(l+1)} = \phi(\tilde{\rho}^{(l)}) =  \rho + \tau \Big\langle \left(2\tau A + \tilde{\rho}^{(l)} I \right)^{-1}\bfm, A \left(2\tau A + \tilde{\rho}^{(l)} I \right)^{-1}\bfm \Big\rangle, \quad l=0,1,\cdots,
\end{align}
is convergent for sufficiently large $\tau$.

\end{lemma}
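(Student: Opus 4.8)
The plan is to establish convergence of the fixed point iteration by showing that $\phi$ is a contraction on a suitable closed interval of $(0,\infty)$ that the iterates cannot leave, and then invoke the Banach fixed point theorem together with the uniqueness of the fixed point already proved in Lemma \ref{lemma:proxJ}. The key quantitative input is the bound on $|\phi'|$ coming from the explicit formula \reff{eqn:phi_prime}; the strategy is to show this bound is of order $1/\tau$, so that for $\tau$ large enough it is strictly less than $1$.

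\textbf{Step 1: locate an invariant interval.} First I would observe from the formula $\phi(\tilde\rho) = \rho + \tau\langle (2\tau A + \tilde\rho I)^{-1}\bfm,\ A(2\tau A + \tilde\rho I)^{-1}\bfm\rangle$ that $\phi(\tilde\rho) \ge \rho$ for all $\tilde\rho > 0$ (the added term is nonnegative since $A$ is positive definite), and that $\phi$ is decreasing with $\phi(0^+) = \tfrac{1}{4\tau}\langle\bfm, A^{-1}\bfm\rangle =: \beta$. Hence the range of $\phi$ on $(0,\infty)$ is contained in $[\rho,\ \rho+\beta]$ (more precisely in $(\rho,\ \rho+\beta)$), and the closed interval $I_0 := [\rho,\ \rho+\beta]$ is mapped into itself by $\phi$. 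Since $\rho > 0$ is part of the case we are in (the case $\rho \le 0$ or $\bfm = \mathbf 0$ being trivial, as noted after Lemma \ref{lemma:proxJ}), $I_0 \subset (0,\infty)$; and any initial guess $\tilde\rho^{(0)} \in (0,\infty)$ produces $\tilde\rho^{(1)} = \phi(\tilde\rho^{(0)}) \in I_0$, so all subsequent iterates stay in $I_0$.

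\textbf{Step 2: contraction estimate.} On $I_0$ I would bound $|\phi'(\tilde\rho)|$ using \reff{eqn:phi_prime}:
\begin{align*}
  |\phi'(\tilde\rho)| = 2\tau\,\big|\langle A(2\tau A+\tilde\rho I)^{-1}\bfm,\ (2\tau A+\tilde\rho I)^{-2}\bfm\rangle\big| \le 2\tau\,\|A\|\,\|(2\tau A+\tilde\rho I)^{-1}\|^{3}\,\|\bfm\|^2.
\end{align*}
Since $A$ is positive definite, write $\lambda_{\min} > 0$ for its smallest eigenvalue and $\lambda_{\max} = \|A\|$ for its largest. For $\tilde\rho \ge \rho > 0$ one has $2\tau A + \tilde\rho I \succeq 2\tau\lambda_{\min} I$, hence $\|(2\tau A+\tilde\rho I)^{-1}\| \le \frac{1}{2\tau\lambda_{\min}}$, and therefore
\begin{align*}
  |\phi'(\tilde\rho)| \le 2\tau\,\lambda_{\max}\cdot\frac{1}{(2\tau\lambda_{\min})^3}\,\|\bfm\|^2 = \frac{\lambda_{\max}\,\|\bfm\|^2}{4\,\lambda_{\min}^3\,\tau^2}.
\end{align*}
Thus $\sup_{\tilde\rho\in I_0}|\phi'(\tilde\rho)| \le \frac{\lambda_{\max}\|\bfm\|^2}{4\lambda_{\min}^3\,\tau^2} =: L(\tau)$, which tends to $0$ as $\tau\to\infty$. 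In particular, for $\tau$ large enough that $L(\tau) < 1$, the map $\phi$ is a strict contraction on the complete metric space $I_0$. By the Banach fixed point theorem, the iteration $\tilde\rho^{(l+1)} = \phi(\tilde\rho^{(l)})$ converges (geometrically, with rate $L(\tau)$) to the unique fixed point in $I_0$, which by Lemma \ref{lemma:proxJ} is exactly $\rho^*$. Since every initialization in $(0,\infty)$ lands in $I_0$ after one step, this proves the claim.

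\textbf{Main obstacle.} The only real subtlety is getting a clean, $\tau$-uniform bound on $\|(2\tau A+\tilde\rho I)^{-1}\|$ on the invariant interval; this is where positive definiteness of $A$ (equivalently $\alpha_1 > 0$, or more generally $\lambda_{\min}(A)>0$) is essential — it is what keeps $\|(2\tau A+\tilde\rho I)^{-1}\|$ of order $1/\tau$ rather than merely $1/\tilde\rho$. One should note that if $\alpha_1 = 0$ and $\nabla\bT$ is singular somewhere, $A$ may degenerate and the argument needs the weaker bound $\|(2\tau A+\tilde\rho I)^{-1}\|\le 1/\tilde\rho \le 1/\rho$, giving $|\phi'| \le \tfrac{\lambda_{\max}\|\bfm\|^2}{2\tau\,\rho^2}$, which still tends to $0$ as $\tau\to\infty$; so the conclusion survives, only the quantitative rate changes. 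I would state the estimate in the robust form $|\phi'(\tilde\rho)| \le \min\!\big\{\tfrac{\lambda_{\max}\|\bfm\|^2}{4\lambda_{\min}^3\tau^2},\ \tfrac{\lambda_{\max}\|\bfm\|^2}{2\tau\rho^2}\big\}$ on $I_0$ to cover both cases, and conclude "for sufficiently large $\tau$" exactly as the lemma asserts.
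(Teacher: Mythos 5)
Your proof is correct and follows essentially the same strategy as the paper's: bound $|\phi'|$ by a quantity of order $1/\tau^2$ so that $\phi$ becomes a contraction for $\tau$ sufficiently large. The paper reaches the bound indirectly, via the observation that $\phi$ is monotonically decreasing and strictly convex (so $\phi'$ is increasing) together with the explicit value $\phi'(0)=-\tfrac{1}{4\tau^2}\bfm^T A^{-2}\bfm$, giving $|\phi'(\tilde\rho)|\le|\phi'(0)|$ for all $\tilde\rho>0$; you instead bound $|\phi'|$ directly by Cauchy--Schwarz and the spectral estimate $\|(2\tau A+\tilde\rho I)^{-1}\|\le 1/(2\tau\lambda_{\min})$, arriving at the slightly looser but still $O(1/\tau^2)$ bound $\lambda_{\max}\|\bfm\|^2/(4\lambda_{\min}^3\tau^2)$, which avoids having to justify convexity of $\phi$. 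Two refinements you add that the paper leaves implicit are worth keeping: the explicit invariant interval $I_0=[\rho,\rho+\beta]$, which makes the contraction/fixed-point argument airtight, and the remark on what degrades when $\alpha_1=0$ and $A$ loses definiteness. One small slip: you write $\phi(0^+)=\tfrac{1}{4\tau}\langle\bfm,A^{-1}\bfm\rangle=:\beta$, dropping the $\rho$ term (the paper's proof of Lemma~\ref{lemma:proxJ} has the same typo); the correct value is $\phi(0^+)=\rho+\tfrac{1}{4\tau}\langle\bfm,A^{-1}\bfm\rangle$, which is what your subsequent claim that the range of $\phi$ lies in $[\rho,\rho+\beta]$ actually uses.
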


\begin{proof}
Note that $\phi(\cdot)$ is monotonically decreasing and strictly convex, so we have that $\phi'(0) < \phi'(\tilde{\rho}) < 0$ for any $\tilde{\rho}^{(0)} \in (0, \infty)$. On the other hand, since
\[
  \phi'(0) = -\frac{1}{4\tau^2}\bfm^TA^{-2}\bfm,
\]
one can take sufficiently large $\tau$ such that $\phi'(0)>-1$. Then the condition $-1 < \phi'(\tilde{\rho}) \le 0$ guarantees that the fixed point iteration is always convergent.
\end{proof}

The computation of the proximal operator for $\iota_{\mathcal{C}}$ follows the results in \cite{papadakis2014optimal}. Note that $\text{prox}_{\iota_\mathcal{C}}= \text{proj}_{\mathcal{C}}$ with $\mathcal{C}$ defined in (\ref{eqn:C}). Taking a similar notation as in \cite{papadakis2014optimal},
\[
  A U_{\text{s}} =(\text{div}(U_{\text{s}}), b(U_{\text{s}})), \quad y = (\mathbf{0}, b_0),
\]
then the convex set $\mathcal{C}$ can be recast into
\begin{align}\label{eqn:C02}
  \mathcal{C} = \{ U_{\text{s}} = (\bfm_{\text{s}},\rho_{\text{s}}) \in\mathcal{D}_s: \ A U_{\text{s}} = y \}.
\end{align}
The projection $\text{proj}_{\mathcal{C}}$ can be computed directly by the following formula
\[
  \text{proj}_{\mathcal{C}}( U_{\text{s}} ) = U_{\text{s}} - A^*(AA^*)^{-1}(AU_{\text{s}} -y). 
\]
We summarize it in the following lemma \cite{papadakis2014optimal}.

\begin{lemma}[$\text{prox}_{\iota_{\mathcal{C}}}$]\label{lemma:iteration_projC}
  The proximal operator $\emph{prox}_{\iota_{\mathcal{C}}} = \emph{proj}_{\mathcal{C}}$ can be computed by 
\begin{align}\label{eqn:proj_update}
  U_{\text{s}}^{(l+1)} = \emph{proj}_{\mathcal{C}}( U_{\text{s}}^{(l)} ) = U_{\text{s}}^{(l)} - A^*(AA^*)^{-1}(AU_{\text{s}}^{(l)} -y). 
\end{align}
Equivalently, the iteration can be performed by firstly solving the Poisson equation
\begin{align}
  - ( D_x^2 + D_y^2 + D_t^2 ) s_{ijk} = \emph{div}(U^{(l)}_{\text{s}}), \quad s_{ijk} \in \mathcal{M}(\mathcal{G}_{\text{s}}^{c})
\end{align}
with homogeneous Neumann boundary condition where $D_q^2, q=x,y,t$ represents the 2nd-order central difference along $q$-direction. This can be achieved by the fast discrete cosine transform in $O(MNQ\log(MNQ))$ operations. Then the update $U_{\text{s}}^{(l+1)} \leftarrow U_{\text{s}}^{(l)}$ is achieved by
\begin{align}
  m^{(l+1)}_{i-\frac{1}{2},j,k} &= m^{(l)}_{i-\frac{1}{2},j,k} + \frac{1}{\Delta x} (s_{i,j,k} - s_{i-1,j,k}), \ (i,j,k)\in \mathcal{S}_{\text{s}}^{x}, \ i \neq 0, M ; \\
  n^{(l+1)}_{i, j-\frac{1}{2},k} &= n^{(l)}_{i, j-\frac{1}{2},k} + \frac{1}{\Delta y} (s_{i,j,k} - s_{i,j-1,k}), \ (i,j,k)\in \mathcal{S}_{\text{s}}^{y}, \ j \neq 0, N ; \\
  \rho^{(l+1)}_{i, j, k-\frac{1}{2} } &= \ \rho^{(l)}_{i, j, k-\frac{1}{2} } + \frac{1}{\Delta t} (s_{i,j,k} - s_{i,j,k-1}), \ (i,j,k)\in \mathcal{S}_{\text{s}}^{t}, \ k \neq 0, Q .
\end{align}

\end{lemma}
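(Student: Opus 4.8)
\emph{Proof plan.} The plan is to reduce the statement to two standard facts and then do the grid bookkeeping. Since $\iota_{\mathcal{C}}$ is the indicator of a closed convex set, $\text{prox}_{\iota_{\mathcal{C}}}=\text{proj}_{\mathcal{C}}$ is immediate from the definition of the proximal operator. Using the affine description (\ref{eqn:C02}), $\mathcal{C}=\{U_{\text{s}}:AU_{\text{s}}=y\}$ with $AU_{\text{s}}=(\div(U_{\text{s}}),b(U_{\text{s}}))$ and $y=(\mathbf 0,b_0)$, I would characterize $\text{proj}_{\mathcal{C}}(U_{\text{s}})$ as the minimizer of $\tfrac12\|\tilde U_{\text{s}}-U_{\text{s}}\|^2$ subject to $A\tilde U_{\text{s}}=y$; first-order optimality gives $\tilde U_{\text{s}}=U_{\text{s}}-A^*\lambda$ with $\lambda$ solving the normal equations $AA^*\lambda=AU_{\text{s}}-y$, which is exactly (\ref{eqn:proj_update}), and iterating this idempotent map gives the stated iteration. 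The one analytic point needing care is that $AA^*$ is invertible: the operator $A$ has a one-dimensional cokernel corresponding to the discrete mass-conservation identity (summing all divergence equations telescopes into the prescribed boundary fluxes), and $y$ lies in $\text{range}(A)$ precisely by the marginal compatibility assumption $\sum\mu_{\text{c}}=\sum\nu_{\text{c}}$, so $AA^*$ is a bijection on $\text{range}(A)$ and $(AA^*)^{-1}(AU_{\text{s}}-y)$ is well defined (equivalently one may use the Moore--Penrose pseudo-inverse).

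Next I would unfold the abstract solve $(AA^*)^{-1}$ into the Poisson problem. Write the multiplier as $\lambda=(s,\beta)$, with $s$ on the centered grid dual to the divergence constraint and $\beta$ dual to the boundary constraint, so $A^*\lambda=\div^* s+b^*\beta$. A discrete summation by parts on the staggered grid identifies $\div^* s$ on interior staggered nodes with minus the discrete gradient of $s$ --- e.g. its $x$-component at $(i-\tfrac12,j,k)$ equals $-(s_{ijk}-s_{i-1,jk})/\Delta x$, and similarly in $y$ and $t$ --- while the remaining components of $A^*\lambda$ sit on boundary staggered nodes. Because the correction $A^*\lambda$ must vanish there (the boundary momenta being fixed to $0$ and the densities at $t=0,1$ to $\mu_{\text{c}},\nu_{\text{c}}$, which the iterate already satisfies, so the boundary block of the normal equations is vacuous), the boundary multiplier $\beta$ is precisely what extends $s$ by homogeneous Neumann reflection across $\partial X$ and $t=0,1$, so that $A^*\lambda$ is the discrete gradient of the Neumann-extended $s$. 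Applying $\div$ then yields $\div(A^*\lambda)=-(D_x^2+D_y^2+D_t^2)s$ at every centered node, so the normal equations collapse to
\[
  -(D_x^2+D_y^2+D_t^2)\,s_{ijk}=\div(U^{(l)}_{\text{s}}),\qquad (i,j,k)\in\mathcal{S}_{\text{c}},
\]
with homogeneous Neumann conditions, as claimed.

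Finally I would address solvability, complexity, and the explicit update. The discrete Neumann Laplacian $D_x^2+D_y^2+D_t^2$ is simultaneously diagonalized by the three-dimensional discrete cosine transform, giving an $O(MNQ\log(MNQ))$ solve; its null space consists of the constants, and the right-hand side $\div(U^{(l)}_{\text{s}})$ has zero total sum over $\mathcal{G}_{\text{c}}$ --- the spatial flux terms telescope to the prescribed zero boundary fluxes and the temporal terms telescope to $\sum\nu_{\text{c}}-\sum\mu_{\text{c}}=0$ --- so a solution $s$ exists and is unique up to an additive constant that does not affect its differences. Substituting $\tilde U_{\text{s}}=U_{\text{s}}-A^*\lambda$ component-wise on the interior staggered nodes, the minus sign in $\div^*$ turns into a plus and produces exactly the three displayed update formulas for $m^{(l+1)}_{i-\frac12,j,k}$, $n^{(l+1)}_{i,j-\frac12,k}$ and $\rho^{(l+1)}_{i,j,k-\frac12}$, with the boundary nodes left untouched. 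I expect the main obstacle to be exactly this last layer: correctly computing the adjoints $\div^*$ and $b^*$ on the staggered grid, verifying that $\beta$ induces precisely the homogeneous Neumann condition (rather than, say, a Dirichlet one), and confirming that the interior stencil is the $7$-point Laplacian with the correct $\Delta x^{-2},\Delta y^{-2},\Delta t^{-2}$ weights; the analytic content (projection onto an affine subspace, solvability) is elementary, and these grid-level identities follow \cite{papadakis2014optimal}.
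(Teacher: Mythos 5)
Your proof is correct in outline and considerably more detailed than the paper's, which simply asserts the result follows from \cite{papadakis2014optimal} by plugging in the expressions for $A$ and $A^*$. Your route --- characterizing $\mathrm{proj}_{\mathcal{C}}$ via the normal equations for the Lagrange multiplier $\lambda=(s,\beta)$, noting the one-dimensional cokernel of $A$ and the role of the marginal compatibility condition $\sum\mu_{\text{c}}=\sum\nu_{\text{c}}$ in guaranteeing solvability (equivalently, passing to the Moore--Penrose pseudo-inverse), computing $\div^*$ on interior staggered nodes as minus the discrete gradient, and then reducing the normal equations to the discrete Neumann Laplacian solved by DCT --- is exactly the argument the paper implicitly relies on. You correctly flag the compatibility of the Poisson right-hand side (its sum vanishes by telescoping and mass conservation), which the paper does not mention.

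One step deserves more care. You justify discarding the boundary block of the normal equations by asserting that ``the iterate already satisfies'' $b(U_{\text{s}}^{(l)})=b_0$. In the Chambolle--Pock iteration (\ref{eqn:PD_Iterations_2terms}) the argument passed to $\mathrm{proj}_{\mathcal{C}}$ is $\hat U_{\text{s}}^{(l)}-\sigma\,\mathcal{I}^*(U_{\text{c}}^{(l+1)})$, and $\mathcal{I}^*$ does deposit mass on the boundary staggered nodes (its $x$-component at $i=0$ and $i=M$, etc., is nonzero), so in general $b(\cdot)\neq b_0$ for that input. The full formula (\ref{eqn:proj_update}) handles this automatically --- the multiplier $\beta$ resets the boundary nodes to $b_0$ --- but the Poisson-equation reformulation with right-hand side $\div(U_{\text{s}}^{(l)})$, together with updates only on interior staggered nodes, reproduces (\ref{eqn:proj_update}) \emph{only after} the boundary of $U_{\text{s}}^{(l)}$ has been clamped to $b_0$ (a pre-step that also changes $\div$ in the boundary-adjacent cells). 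If you carry out the normal-equation computation without that assumption, you find $\beta=b(U_{\text{s}}^{(l)})-b_0-b(\div^*s)$, the boundary nodes of $U_{\text{s}}^{(l+1)}$ become $b_0$, and the divergence block reads $\div(\div^*s)+\div(b^*\beta)=\div(U_{\text{s}}^{(l)})$; the term $\div(b^*\beta)$ is precisely what turns the ghost-cell Neumann stencil into a consistent equation and simultaneously replaces $\div(U_{\text{s}}^{(l)})$ by the divergence of the boundary-clamped field. Making this explicit would close the only real gap in your argument; the remainder of your grid bookkeeping (sign of $\div^*$, the seven-point stencil with weights $\Delta x^{-2},\Delta y^{-2},\Delta t^{-2}$, and the interior update formulas) is correct.
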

\begin{proof}
The proof is straightforward by directly plugging in the explicit expression of $A$ and $A^*$ into the update (\ref{eqn:proj_update}). So we omit the details. 
\end{proof}

Now we are ready to present the Chambolle-Pock primal-dual algorithm. The Chambolle-Pock algorithm generates a sequence $(U_s^{(l)}, \hat{U}_s^{(l)}, U_c^{(l)} ) \in \mathcal{D}_\text{s} \times \mathcal{D}_\text{s} \times \mathcal{D}_\text{c} $, starting from initial $(U_s^{(0)}, \hat{U}_s^{(0)}, U_c^{(0)} ) \in \mathcal{D}_\text{s} \times \mathcal{D}_\text{s} \times \mathcal{D}_\text{c} $ by the following iterations
\begin{align}\label{eqn:PD_Iterations_2terms}
  \begin{cases}
     U_{\text{c}}^{(l+1)} = \text{prox}_{\tau\mathcal{J}^*} (U_{\text{c}}^{(l)} + \tau \mathcal{I}(U_{\text{s}}^{(l)}) ), \\
     \hat{U}_{\text{s}}^{(l+1)} = \text{proj}_{\mathcal{C}} (\hat{U}_{\text{s}}^{(l)} - \sigma \mathcal{I}^*(U_{\text{c}}^{(l+1)}) ), \\
     U_{\text{s}}^{(l+1)} = \hat{U}_{\text{s}}^{(l+1)} + \theta (\hat{U}_{\text{s}}^{(l+1)} - \hat{U}_{\text{s}}^{(l)}).
  \end{cases}
\end{align}
Here $\text{prox}_{\tau\mathcal{J}^*}$ is calculated as follows
\begin{align}\label{eqn:Moreau}
  \text{prox}_{\tau\mathcal{J}^*}(U_{\text{c}}) = U_{\text{c}} - \tau \text{prox}_{\tau^{-1}\mathcal{J}}(\tau^{-1}U_{\text{c}}).
\end{align}
The operators $\text{prox}_{\tau\mathcal{J}}$ and $\text{proj}_{\mathcal{C}}$ are calculated according to Lemma \ref{lemma:iteration_proxJ} and Lemma \ref{lemma:iteration_projC}, respectively.

Let $U_{\text{s}}^*$ be the solution of (\ref{eqn:SyncOT_discretized}). The convergence $U_{\text{s}}^{(l)} \rightarrow U_{\text{s}}^*$ is guaranteed \cite{chambolle2011first} with $O(1/N)$ rate of convergence for the primal-dual gap,  provided that $\theta = 1$ and $\tau\sigma \|\mathcal{I}\|^2 < 1$. This has been extended in \cite{banert2023chambolle} to hold when $\theta > \frac{1}{2}$ and $\tau\sigma\|\mathcal{I}\|^2\le \frac{4}{1+2\theta}$. Hereafter, we will fix $\theta = 1$ for the numerical experiments.

\section{Numerical Methods for Synchronized Optimal Transport in Kantorovich Form}\label{section:Numerical_Kantorovich} 

In this section, we present numerical methods for solving SyncOT in Kantorovich form. In this case, the problem is in the form of 
\begin{align}\label{eqn:3terms}
  \min_x f(K(x)) + g(x) + h(x).
\end{align}
Here $K: X\rightarrow Y$ is a continuous linear operator with induced norm 
\[
\|K\| = \max \{ \|Kx\|: \|x\|\le 1 \text{\ for\ } x\in X \}.
\]
$f, g, h$ are proper convex and lower-semicontinuous (l.s.c.). Importantly, we assume that $f, g$ are proximal friendly, and $h$ has a $\beta^{-1}$ Lipschitz continuous gradient. On the other hand, we assume that $h(x)$ is proximally intractable but the gradient is easy to compute or is accessible through auto-differentiation. 

There are several widely used algorithms that are suitable for the problem (\ref{eqn:3terms}). The first algorithm, proposed in \cite{condat2013primal, vu2013splitting}, called Condat-Vu algorithm, generalized the Chambolle-Pock algorithm \cite{chambolle2011first}. Later, a Primal-Dual Fixed-Point algorithm (PDFP) has been proposed in \cite{chen2016primal}, in which two proximal mappings of $g$ are computed in each iteration. PDFP is advantageous of having a wider range of acceptable parameters than Condat-Vu. Most recently, Yan proposed the Yan algorithm in \cite{yan2018new}, which has the same regions of acceptable parameters with PDFP and the same per-iteration complexity as Condat-Vu. 

The above algorithms, Condat-Vu, PDFP, Yan, are all priml-dual based algorithms. Let $(x,\bar{x},s)$ and $(x^+, \bar{x}^+, s^+)$ be the current and the next iterations. The first two steps of Condat-Vu, PDFP and Yan are the same:
\begin{align}
  s^+ &= \text{prox}_{\delta f}(s + \delta K \bar{x}), \label{eqn:3terms_step1}\\
x^+ & = \text{prox}_{\gamma g}(x - \gamma K^* s^+ - \gamma \nabla h(x) ). \label{eqn:3terms_step2}
\end{align}
The key difference lies in the third step of updating $\bar{x}^+$:
\begin{subequations}
  \begin{align}
    \text{Condat-Vu}: \quad &\bar{x}^+ = 2x^+ - x, \label{eqn:3terms_step3_CondatVu} \\
    \text{PDFP}: \quad &\bar{x}^+ = \text{prox}_{\gamma g}(x^+ - \gamma K^* s^+ - \gamma \nabla h(x^+)), \label{eqn:3terms_step3_PDFP} \\
    \text{Yan}: \quad  & \bar{x}^+ = 2x^+ - x + \gamma \nabla h(x) - \gamma \nabla h(x^+). \label{eqn:3terms_step3_Yan}
  \end{align}
\end{subequations}
In summary, (\ref{eqn:3terms_step1}, \ref{eqn:3terms_step2}, \ref{eqn:3terms_step3_CondatVu}) comprise the Condat-Vu algorithm, (\ref{eqn:3terms_step1}, \ref{eqn:3terms_step2}, \ref{eqn:3terms_step3_PDFP}) comprise PDFP, and (\ref{eqn:3terms_step1}, \ref{eqn:3terms_step2}, \ref{eqn:3terms_step3_Yan}) comprise the Yan algorithm. The convergence conditions for the three algorithms are summarized as follows \cite{yan2018new,condat2013primal,vu2013splitting,chen2016primal}:
\begin{subequations}
  \begin{align}
    \text{Condat-Vu}: \quad & \gamma\delta\|KK^*\| + \frac{\gamma}{2\beta} \le 1, \label{eqn:3terms_conv_CondatVu} \\
    \text{PDFP}: \quad & \gamma\delta \|KK^*\|< 1, \ \frac{\gamma}{2\beta} < 1, \label{eqn:3terms_conv_PDFP} \\
    \text{Yan}: \quad  & \gamma\delta \|KK^*\|< 1, \ \frac{\gamma}{2\beta} < 1. \label{eqn:3terms_conv_Yan}
  \end{align}
\end{subequations}
Note that when $h(x)\equiv0$, both Condat-Vu and Yan reduce to the classic Chambolle-Pock \cite{chambolle2011first}.

Among these three algorithms, Yan algorithm has the same per-iteration complexity as the Condat-Vu algorithm (both being faster than PDFP), and shares the same range of acceptable parameters as PDFP (both wider than that of Condat-Vu). Therefore, we will adopt Yan algorithm for our Kantorovich SyncOT problem (\ref{eqn:KantorovichSynOT_02}).

Using the discretization in subsection \ref{subsection:Staggered}, the Kantorovich SyncOT problem (\ref{eqn:KantorovichSynOT_02}) is approximated by solving the following finite-dimensional convex problem over the staggered grid:
\begin{align}\label{eqn:SyncOT_Kantorovich_discretized}
  \min_{U_{\text{s}}\in \mathcal{D}_{\text{s}}} \mathcal{J}(\mathcal{I}(U_{\text{s}})) + \iota_{\mathcal{C}}(U_{\text{s}}) + \mathcal{H}(U_{\text{s}}),
\end{align}
where $\mathcal{J}$ is defined in (\ref{eqn:J_discretized}) in which $J$ is defined in (\ref{eqn:J_definition}) with $A = I$, the set of constraint $\mathcal{C}$ is given in (\ref{eqn:C}), and $\mathcal{H}$ is defined as follows,
\begin{align}\label{eqn: mathcalH}
  \mathcal{H}(U_{\text{s}}) = \mathcal{H}(\rho_{\text{s}}) = \sum_{k=0}^{Q-1} \frac{1}{\Delta t_k} W_2^2\left( \left(\rho_{i,j,k-\frac{1}{2}}\right)_{ij}, \left(\rho_{i,j,k+\frac{1}{2}}\right)_{ij}\right).
\end{align}

Taking
\[
x = U_{\text{s}}\in \mathcal{D}_{\text{s}}, \quad f = \mathcal{J}, \quad K = \mathcal{I}, \quad g = \iota_{\mathcal{C}}, \quad h = \mathcal{H},
\]
Yan algorithm generates a sequence $(U_s^{(l)}, \hat{U}_s^{(l)}, U_c^{(l)} ) \in \mathcal{D}_\text{s} \times \mathcal{D}_\text{s} \times \mathcal{D}_\text{c} $, starting from the given initial $(U_s^{(0)}, \hat{U}_s^{(0)}, U_c^{(0)} ) \in \mathcal{D}_\text{s} \times \mathcal{D}_\text{s} \times \mathcal{D}_\text{c} $ by the following iterations
\begin{align}\label{eqn:PD_Iterations_3terms}
  \begin{cases}
     U_{\text{c}}^{(l+1)} = \text{prox}_{\tau\mathcal{J}^*} (U_{\text{c}}^{(l)} + \tau \mathcal{I}(\bar{U}_{\text{s}}^{(l)}) ), \\
     U_{\text{s}}^{(l+1)} = \text{proj}_{\mathcal{C}} \left(U_{\text{s}}^{(l)} - \sigma \mathcal{I}^*(U_{\text{c}}^{(l+1)}) - \sigma\nabla\mathcal{H}(U_{\text{s}}^{(l)}) \right), \\
     \bar{U}_{\text{s}}^{(l+1)} = 2U_{\text{s}}^{(l+1)} - U_{\text{s}}^{(l)} + \sigma\nabla\mathcal{H}(U_{\text{s}}^{(l)}) - \sigma\nabla\mathcal{H}(U_{\text{s}}^{(l)}).
  \end{cases}
\end{align}
Similar to the discussion in Section \ref{subsection:Primal_dual}, $\text{prox}_{\tau\mathcal{J}^*}$ is calculated by Moreau identity (\ref{eqn:Moreau}). Furthermore, $\text{prox}_{\tau\mathcal{J}}$ and $\text{proj}_{\mathcal{C}}$ are calculated according to Lemma \ref{lemma:iteration_proxJ} and Lemma \ref{lemma:iteration_projC}, respectively. Importantly, $\nabla \mathcal{H}(U_{\text{s}
}^{(l)})$ is computed via auto-differentiation.

\begin{remark}

By the convergence theory of Yan algorithm \cite{{yan2018new}}, $\mathcal{H}$ in (\ref{eqn: mathcalH}) is required to be a convex function with $\beta^{-1}$ Lipschitz gradient. The convexity of $\mathcal{H}(\cdot)$ is evident. Indeed, since
\begin{align*}
  &\mathcal{H}(\lambda_1\rho^{(1)}_{\text{s}}+ \lambda_2\rho^{(2)}_{\text{s}}) \\
  =& \sum_{k=0}^{Q-1} \frac{1}{\Delta t_k} W_2^2\left( \lambda_1 \left(\rho^{(1)}_{i,j,k-\frac{1}{2}}\right)_{ij} + \lambda_2 \left(\rho^{(2)}_{i,j,k-\frac{1}{2}}\right)_{ij}, \lambda_1\left(\rho^{(1)}_{i,j,k+\frac{1}{2}}\right)_{ij} + \lambda_2\left(\rho^{(2)}_{i,j,k+\frac{1}{2}}\right)_{ij}\right),
\end{align*}
for $\lambda_1, \lambda \in [0,1]$ and $\lambda_1+\lambda_2 = 1$, we only need to verify that $W_2^2(a,b)$ is jointly convex. To this end, consider $W_2^2(a^{(1)},b^{(1)})$ and $W_2^2(a^{(2)},b^{(2)})$, and assume that $P^{(i)}$ are optimal couplings between $a^{(i)}$ and $b^{(i)}$ for $i=1,2$. Then 
\begin{align*}
  & \ W_2^2(\lambda_1 a^{(1)}+ \lambda_2 a^{(2)}, \lambda_1 b^{(1)}+ \lambda_2 b^{(2)} ) \\
  \le \ & \left\langle \lambda_1 P^{(1)} + \lambda_2 P^{(2)}, C \right\rangle \\
  = \ & \lambda_1 \langle  P^{(1)}, C \rangle + \lambda_2 \langle  P^{(2)}, C \rangle \\
  = \ &\lambda_1 W_2^2(a^{(1)},b^{(1)}) + \lambda_2 W_2^2(a^{(2)},b^{(2)}).
\end{align*}
Therefore it follows the joint convexity of $W_2^2(\cdot,\cdot)$, and therefore the convexity of $\mathcal{H}(\cdot)$.

The gradient of $\mathcal{H}(\cdot)$ is due to the fact that:
\begin{align}
  \nabla W_2^2(a,b) = 
  \begin{bmatrix*}[r]
    f^* \\
    g^* \\
\end{bmatrix*},
\end{align}
in which $f^*$ and $g^*$ are the optimal Kantorovich potentials chosen so that their coordinates sum to 0. Then it follows that
\begin{align}\label{eqn:gradient_H}
  \nabla \mathcal{H}(\rho_{\text{s}}) = \left( f^*_{i,j,k-\frac{1}{2}} \right)_{(i,j,k)\in\mathcal{S}_{\text{s}}^t}.
\end{align}
Here for $k = 1, \cdots, Q-1$, 
\begin{align}
  f^*_{i,j,k-\frac{1}{2}} = \bar{f}^*_{i,j,k-\frac{1}{2}} + \hat{f}^*_{i,j,k-\frac{1}{2}},
\end{align}
where
\begin{align*}
  \left( \bar{f}^*_{i,j,k-\frac{1}{2}} \right)_{ij} & = \nabla_{a} W_2^2\left( \left(\rho_{i,j,k-\frac{3}{2}}\right)_{ij}, \left(\rho_{i,j,k-\frac{1}{2}}\right)_{ij}\right), \quad \text{with\ } \sum_{i,j} \bar{f}^*_{i,j,k-\frac{1}{2}} = 0, \\
  \left( \hat{f}^*_{i,j,k-\frac{1}{2}} \right)_{ij} & = \nabla_{a} W_2^2\left( \left(\rho_{i,j,k-\frac{1}{2}}\right)_{ij}, \left(\rho_{i,j,k+\frac{1}{2}}\right)_{ij}\right), \quad \text{with\ } \sum_{i,j} \hat{f}^*_{i,j,k-\frac{1}{2}} = 0,
\end{align*}
and for $k=0$ and $k=Q$, 
\begin{align*}
  \left( f^*_{i,j,-\frac{1}{2}} \right)_{ij} & = \nabla_{a} W_2^2\left( \left(\rho_{i,j,-\frac{1}{2}}\right)_{ij}, \left(\rho_{i,j,\frac{1}{2}}\right)_{ij}\right), \quad \text{with\ } \sum_{i,j} f^*_{i,j,-\frac{1}{2}} = 0,\\
  \left( f^*_{i,j,Q-\frac{1}{2}} \right)_{ij} & = \nabla_{a} W_2^2\left( \left(\rho_{i,j,Q-\frac{3}{2}}\right)_{ij}, \left(\rho_{i,j,Q-\frac{1}{2}}\right)_{ij}\right), \quad \text{with\ } \sum_{i,j} f^*_{i,j,Q-\frac{1}{2}} = 0.
\end{align*}
Here $\nabla_a W_2^2(\cdot,\cdot)$ represents the gradient with respect to the first variable.

On the other hand, since the gradient of $\mathcal{H}(\rho_{\text{s}})$ in (\ref{eqn:gradient_H}) depends on $\rho_{\text{s}}$ in an implicit way, it remains unclear whether $\nabla \mathcal{H}(\rho_{\text{s}})$ is $\beta^{-1}$ Lipschitz continuous. Therefore, the convergence of Yan algorithm (\ref{eqn:PD_Iterations_3terms}) when applied to the Kantorovich SyncOT problem (\ref{eqn:KantorovichSynOT_02}) is currently only demonstrated by empirical and numerical evidence.

\end{remark}

\section{Numerical Experiments}\label{section:Numerical}

In this section, we present several numerical experiments to demonstrate the SyncOT model framework in both Monge and Kantorovich forms.

\subsection{Spaces connected through Monge map}

\begin{figure}[p]
  \centering
  \includegraphics[width=0.75\textwidth]{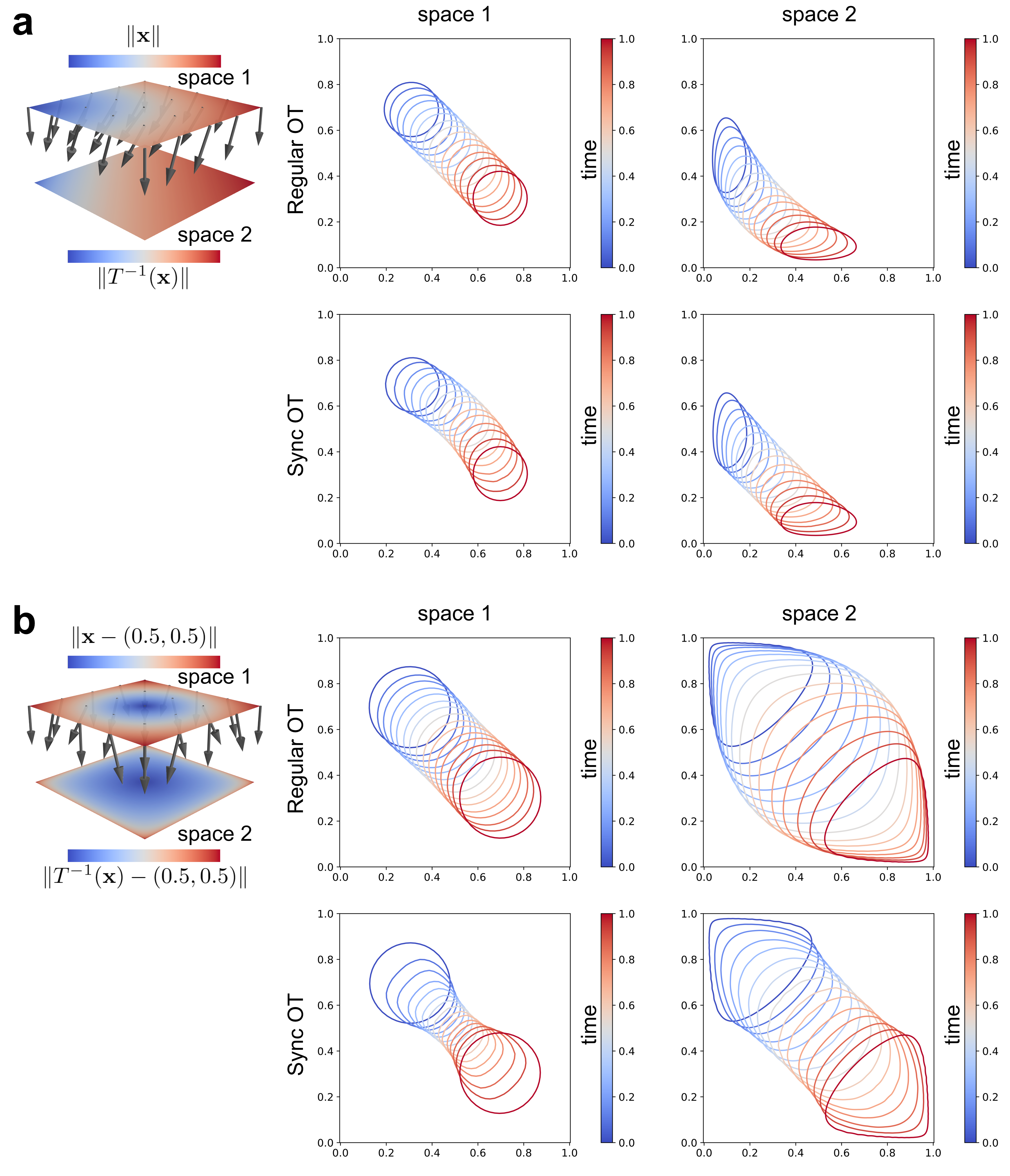} 
  \caption{SyncOT in Monge form with $X$ and $Y$ in $\mathbb{R}^2$. The spaces connected by $\mathbf{T}(x,y)=(x^2,y^2)$ in (a) and $\mathbf{T}(x,y)=\left(\frac{1}{1+e^{-10(x-0.5)}},\frac{1}{1+e^{-10(y-0.5)}}\right)$ in (b). The parameters $\alpha_1=0.05$ and $\alpha_2=0.95$ were used for SyncOT in both examples. The evolving densities $\rho_t$ are visualized as levelsets of $0.5(\mathrm{min}\rho_t+\mathrm{max}\rho_t)$. The maps are visualized with matching colors in spaces 1 and 2.}
  \label{fig:numerical_02}
\end{figure}

In this section, we illustrate SyncOT with examples where spaces are connected by Monge maps. In the numerical examples, we consider the unit square as the computation domain and the marginal distributions are generated with normalized Gaussian distribution truncated to the unit square,
\[
  N_{[0,1]^2}(x,y;x_0,y_0,\sigma) = \frac{p(x,y;x_0,y_0,\sigma)}{\int_{[0,1]^2}p(x,y;x_0,y_0,\sigma)\mathrm{d}x\mathrm{d}y},
\]
where $p(x,y;x_0,y_0,\sigma)=e^{-\frac{(x-x_0)^2+(y-y_0)^2}{2\sigma^2}}$.

First, we consider maps $\mathbf{T}:\mathbb{R}^{2}\rightarrow\mathbb{R}^2$ in Figure \ref{fig:numerical_02}. In Figure \ref{fig:numerical_02}a, the problem is set up with 
\begin{equation}
  \rho_0 = N_{[0,1]^2}(\cdot,\cdot;0.3,0.7,0.1), \quad\rho_1 = N_{[0,1]^2}(\cdot,\cdot;0.7,0.3,0.1),\quad \mathbf{T}(x,y)=(x^2,y^2),
\end{equation}
where the Monge map is induced by $\mathbf{T}$.
Solving regular OT on the first space $X$ resulted in an optimal straight path in $X$, which corresponds to a nonoptimal curved path in the second space $Y$. To illustrate the idea of SyncOT, we applied a large weight to the second space by setting $\alpha_1=0.05, \alpha_2=0.95$ in (\ref{eqn:SynOT_2Spaces}). Shown in Figure \ref{fig:numerical_02}a, this led to a suboptimal curved path in $X$ but a near-optimal straight path in $Y$. Similarly, in Figure \ref{fig:numerical_02}b, we took the same marginal distributions as in Figure \ref{fig:numerical_02}a, with the Monge map induced by
\begin{equation}
  \mathbf{T}(x,y)=\left(\frac{1}{1+e^{-10(x-0.5)}},\frac{1}{1+e^{-10(y-0.5)}}\right),
\end{equation}
which pushes the mass on the diagonal towards the corners $(0,0)$ and $(1,1)$. While regular OT and SyncOT both led to straight paths of mass center in $X$ and $Y$, regular OT preserves the shape in $X$, but the corresponding path in $Y$ undergoes expansion and contraction. In SyncOT, the size of the evolving density stays much more consistent in $Y$ compared to that in $X$, as shown in Figure \ref{fig:numerical_02}b.

\begin{figure}[p]
  \centering
  \includegraphics[width=0.8\textwidth]{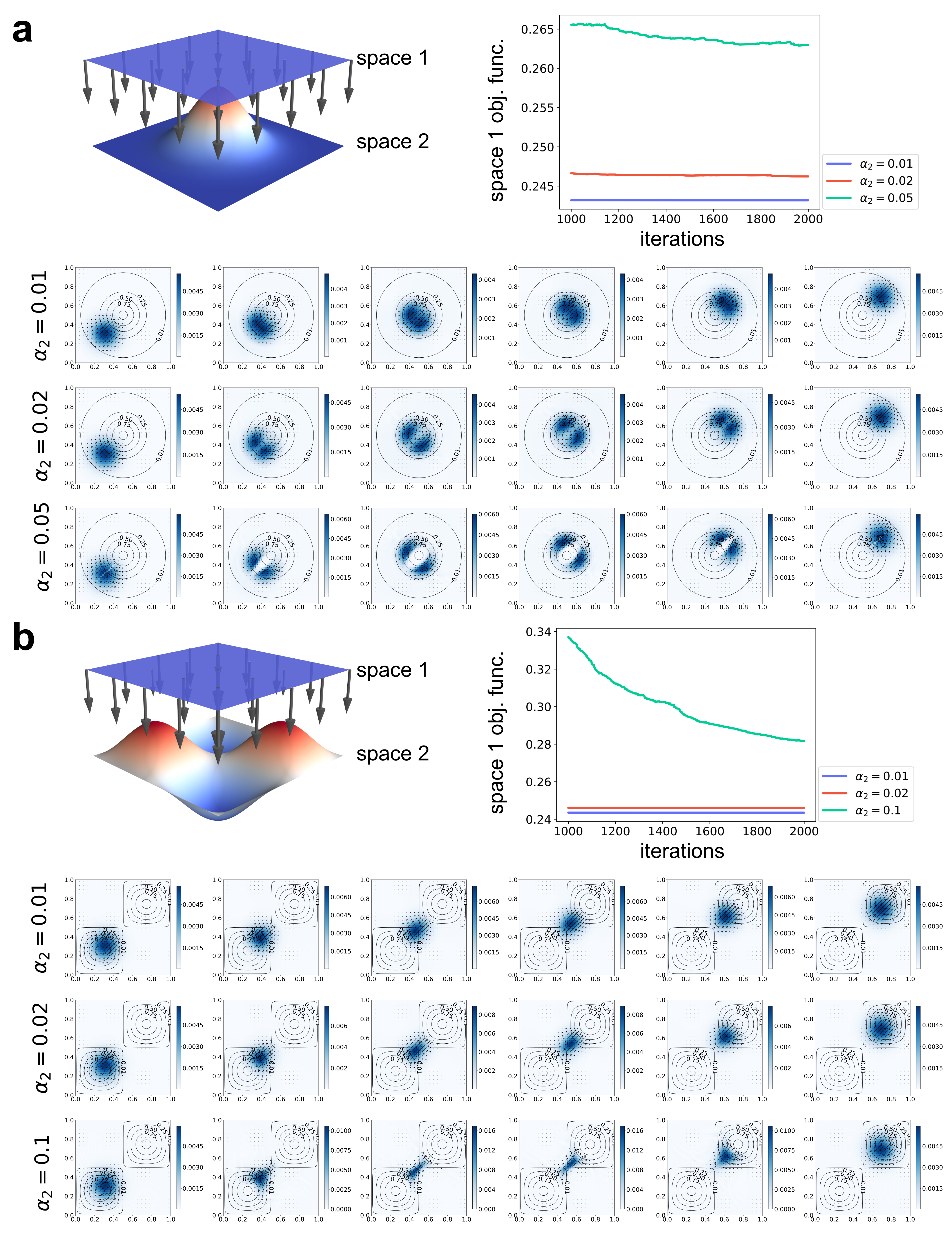} 
  \caption{SyncOT in Monge form with $X\subset\mathbb{R}^2$ and $Y\subset\mathbb{R}^3$. In both cases, $\rho_0=N_{[0,1]^2}(\cdot,\cdot;0.3,0.3,0.1)$ and $\rho_1=N_{[0,1]^2}(\cdot,\cdot;0.7,0.7,0.1)$. The Monge maps are $\mathbf{T}(x,y,z)=(x,y,e^{-\frac{(x-0.5)^2+(y-0.5)^2}{2\sigma^2}}), \sigma=0.15$ and $\mathbf{T}(x,y)=(x,y,\sin(2\pi x)\sin(2\pi y))$ for (a) and (b), respectively. The evolving density is visualized as heatmaps in $X$ and space $Y$ is visualized as contour lines. The transport costs in space $X$ over the optimization iterations are shown for different values of the parameter $\alpha_2$.}
  \label{fig:numerical_03}
\end{figure}

We further illustrate SyncOT with maps $\mathbf{T}: \mathbb{R}^2\rightarrow\mathbb{R}^3$, where we consider $X=[0,1]^2$ and $Y=\{(x,y,z): z=z(x,y),\,\mathrm{for} (x,y)\in X\}$. In both examples in Figure \ref{fig:numerical_03}, the marginal distributions were taken to be $\rho_0=N_{[0,1]^2}(\cdot,\cdot;0.3,0.3,0.1)$ and $\rho_1=N_{[0,1]^2}(\cdot,\cdot;0.7,0.7,0.1)$. In Figure \ref{fig:numerical_03}a, the map $z(x,y)=e^{-\frac{(x-0.5)^2+(y-0.5)^2}{2\sigma^2}}$ with $\sigma=0.15$. With an increasing weight for the objective function in $Y$, from $\alpha_2=0.01, 0.02$ to $0.05$, the pattern becomes clearer where the path circumvents the central bump. As illustrated by the objective function in $X$ that larger value of $\alpha_2$ led to higher cost in $X$, the lower cost path in $Y$ was achieved by trading off the path optimility in $X$. In another example shown in Figure \ref{fig:numerical_03}b, a similar property was observed as expected. Here, $Y$ is induced by $z(x,y)=\sin(2\pi x)\sin(2\pi y)$. SyncOT led to a distribution with shrinked size at the midpoint of the transport, so that the distribution stayed on the ridge between the two bumps to minimize the transport cost in $Y$. This behavior became increasingly evident and the cost in $X$ increased as the weight $\alpha_2$ increased.

\begin{figure}[p]
  \centering
  \includegraphics[width=0.8\textwidth]{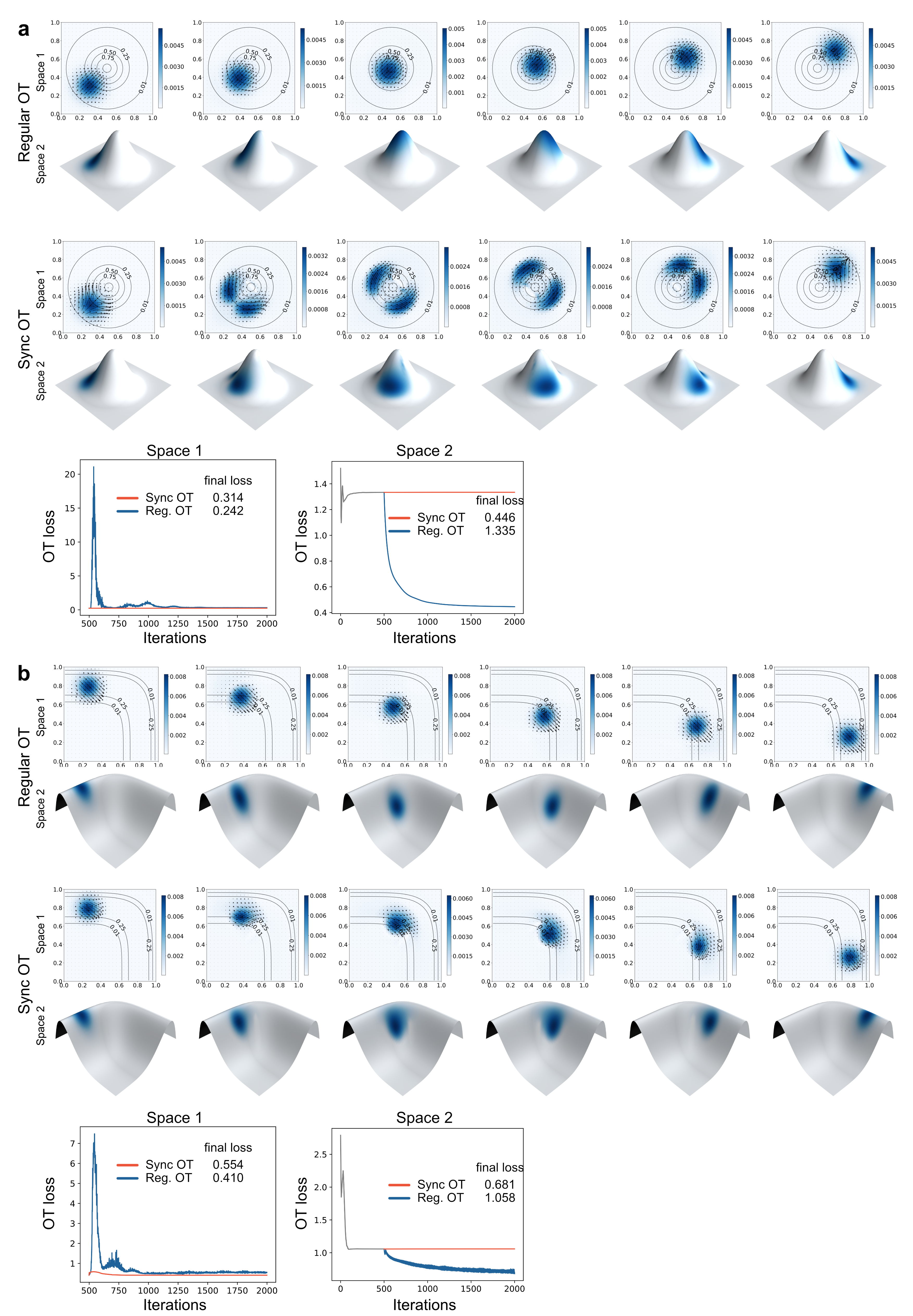} 
  \caption{SyncOT solved in Kantorovich form for $X\subset\mathbb{R}^2$ and $Y\subset\mathbb{R}^3$ (a) $\rho_0=N_{[0,1]^2}(\cdot,\cdot;0.3,0.3,0.1)$, $\rho_1=N_{[0,1]^2}(\cdot,\cdot;0.7,0.7,0.1)$, $\mathbf{T}(x,y,z)=(x,y,e^{-\frac{(x-0.5)^2+(y-0.5)^2}{2\sigma^2}}), \sigma=0.15$. (b) $\rho_0=N_{[0,1]^2}(\cdot,\cdot;0.25,0.8,0.08)$, $\rho_1=N_{[0,1]^2}(\cdot,\cdot;0.8,0.25,0.08)$, $\mathbf{T}(x,y) = (x,y,-0.5\cos(5\sqrt{x^5+y^5}))$. The evolving densities are shown in both spaces where space $Y$ is visualized as contours, and space $X$ as heatmaps. The transport cost in both spaces over iterations are visualized.}
  \label{fig:numerical_04}
\end{figure}

\subsection{Spaces connected through Kantorovich coupling}
In this section, we demonstrate the numerical solvers designed for problem (\ref{eqn:SyncOT_Kantorovich_discretized}) when the correspondence between two spaces is described in the Kantorovich form. Figure \ref{fig:numerical_04}a revisits the same problem initially presented in Figure \ref{fig:numerical_03}a, showing that identical results can be achieved using the Kantorovich form instead of the Monge form in numerical implementation. Specifically, the joint distribution $\pi\in\mathcal{M}_+ (X\times Y)$ induced by $\mathbf{T}:X\rightarrow Y$ is defined as 
\begin{equation*}
  \pi(A) = \mu(\{\mathbf{x}\in X: (\mathbf{x}, \mathbf{T}(\mathbf{x}))\in A\}).
\end{equation*}
In Figure \ref{fig:numerical_04}b, we consider $\rho_0=N_{[0,1]^2}(\cdot,\cdot;0.25,0.8,0.08)$ and $\rho_1=N_{[0,1]^2}(\cdot,\cdot;0.8,0.25,0.08)$. The second space $Y$ is induced by $\mathbf{T}: (x,y)\mapsto (x,y,-0.5\cos(5\sqrt{x^5+y^5}))$. While this problem could be solved using the Monge form, we implemented it in Kantorovich form to illustrate the efficiency of the numerical algorithm. Specifically, space $X$ is discretized into regular grids and $Y$ is discretized by applying the map $\mathbf{T}$ to the grid points in $X$. The Kantorovich map was then represented by an identity matrix in this case. In this example, the optimal path in $X$ corresponds to a path in $Y$ that descents into the valley and ascent back to the ridge. SyncOT resulted in a shorter path in $Y$ that stays near the ridge region during transport, but a longer and slightly curved path in $X$.

\begin{figure}[p]
  \centering
  \includegraphics[width=0.8\textwidth]{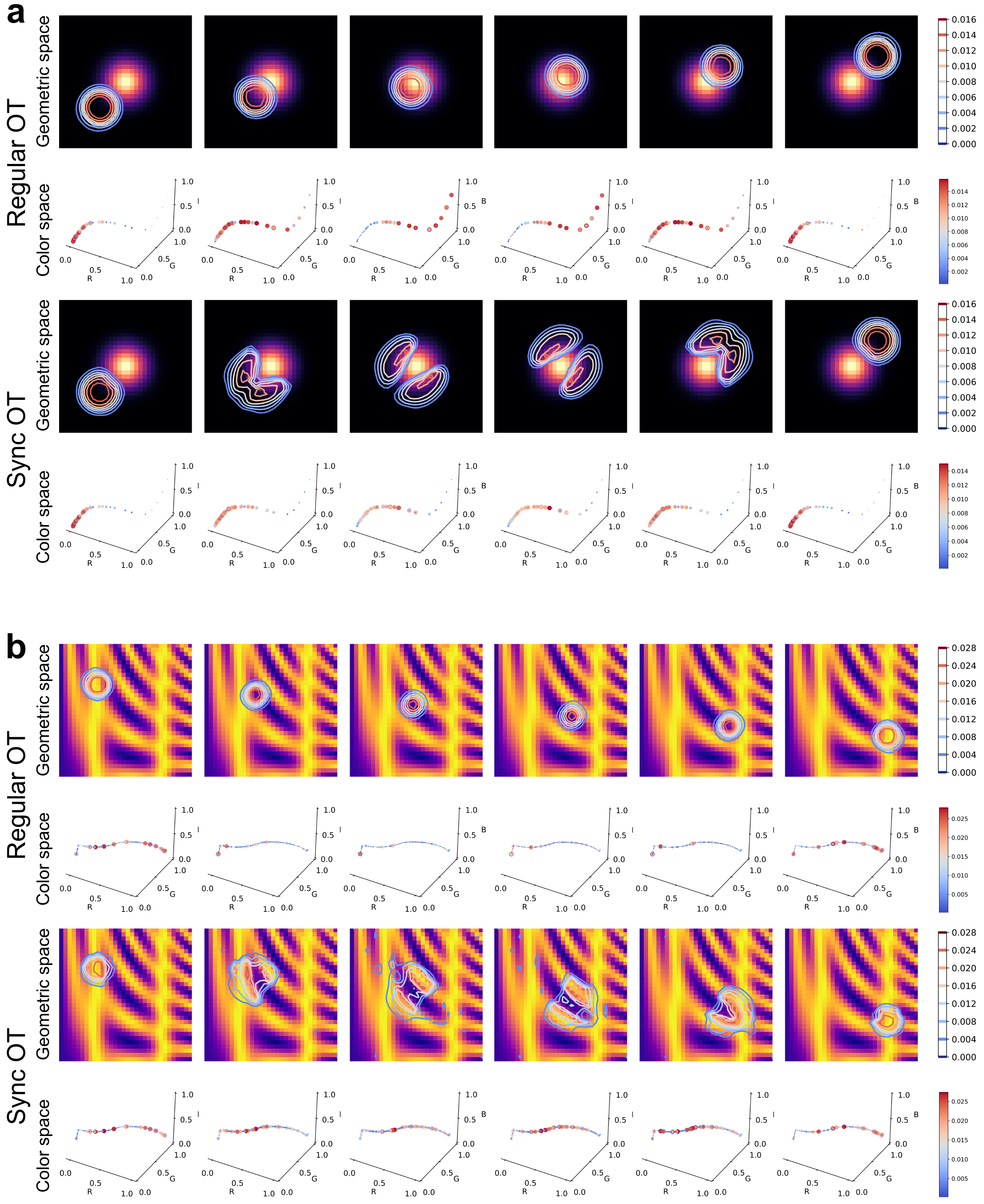} 
  \caption{SyncOT addressing consistency in background environment. In both experiments, space $X$ is the spatial space which is a unit square, and space $Y$ is a one-dimensional curve in the three-dimensional RGB space describing the background colors of the spatial space $X$. The evolving densities are visualized as contours in space $X$ and as weighted point clouds in space $Y$, where the weight is described by point color and size. The synthetic examples are generated by applying a scalar colormap from the package Matplotlib to a predefined function. The color maps {\it magma} and {\it cividis} and the functions $f(x,y)=e^{-\frac{(x-0.5)^2+(y-0.5)^2}{0.15^2}}$ and $f(x,y)=\sin^{10}(6x)+\cos(10+36xy)\cos(6x)$ were used for (a) and (b), respectively.  }
  \label{fig:numerical_05}
\end{figure}

Finally, we illustrate the SyncOT solver in a scenario commonly encountered in biological system modeling, where the transported mass tends to stay within a specific environment, such as migrating cells that prefer a certain microenvironment. To effectively demonstrate the numerical algorithm, we abstract this problem into a conceptual example where space $X$ is equipped with a certain background color, and the color resembles $Y$ as a subspace within the three-dimensional RGB color space. In these examples, SyncOT resulted in paths that stayed in a consistent environment, as depicted by the background colors during transportation; specifically, the darker and brighter regions in Figure \ref{fig:numerical_05}a and \ref{fig:numerical_05}b, respectively. In both examples, SyncOT achieved lower transport costs in the color space by taking paths with longer spatial distances.

\section{Conclusion and Discussion}

In this work, we introduce a framework to model the synchronized optimal transport cross multiple spaces. Two types of correspondence between the spaces, the Monge form and the Kantorovich form, are considered. The SyncOT aims to minimize the total cost over all spaces. For the Monge SyncOT, we show that it can be recast into a single-space OT problem with a generalized kinetic energy in terms of the Monge map. The Kantorovich SyncOT problem is reformulated using the Riemann sum approximation for practical implementation. We have developed primal-dual based numerical algorithms to solve both the Monge and Kantorovich SyncOT problems. The properties of SyncOT and the effects of various parameter values are illustrated through numerical examples. 

This work can be further extended in several directions. Firstly, efficient numerical algorithms for high dimensions are needed to apply SyncOT to high-dimensional data such as single-cell multi-omics data. Recently, deep learning based approaches have been developed to solve high-dimensional dynamical OT problem \cite{neklyudov2023computational}, which can potentially be extended to solve high-dimensional SyncOT problems. Secondly, in practice, static optimal transport or neural network could be used to find and represent the correspondence between the spaces. In the cases when the correpondence between spaces can not be determined in advance, it would be useful to develop a more flexible framework of SyncOT to optimize for the maps between the spaces as well. Lastly, this work currently considers balanced optimal transport. An important future extension is to develop unbalanced SyncOT for applications where it is needed to allow the mass to grow or decay during transportation.

\section*{Acknowledgements}

Z. Cang's work is supported by a startup grant from North Carolina State University and NSF grant DMS2151934. Y. Zhao's work is supported by a grant from the Simons Foundation through Grant No. 357963 and NSF grant DMS2142500.

\end{document}